\newcommand{\Q}{\mathbb{Q}}
\newcommand{\Z}{\mathbb{Z}}
\newcommand{\F}{\mathbb{F}}
\newcommand{\C}{\mathbb{C}}
\newcommand{\Gal}{{\rm Gal}}
\newcommand{\GL}{{\rm GL}}
\newcommand{\AGL}{{\rm AGL}}
\newcommand{\Row}{{\rm Row}}
\newtheorem{thm}{Theorem}
\newtheorem{lem}[thm]{Lemma}
\newtheorem{clm}[thm]{Claim}
\newtheorem*{defn}{Definition}
\newtheorem*{ack}{Acknowledgements}
\newcommand{\im}{{\rm im}~}
\newcommand{\ghappy}{\substack{\smiley{}\hspace{0.05in}\vspace{-0.05in}\\\vspace{0.05in}\displaystyle{\Gamma_{0}(2)}}}
\begin{document}
\title[Odd Order Reductions]{The density of odd order reductions for elliptic curves with a rational point of order $2$}
\author{Ke Liang}
\author{Jeremy Rouse}
\begin{abstract}
  Suppose that $E/\Q$ is an elliptic curve with a rational point $T$
  of order $2$ and $\alpha \in E(\Q)$ is a point of infinite order. We
  consider the problem of determining the density of primes $p$ for
  which $\alpha \in E(\F_{p})$ has odd order. This density is
  determined by the image of the arboreal Galois representation
  $\tau_{E,2^{k}} : \Gal(\overline{\Q}/\Q) \to \AGL_{2}(\Z/2^{k}
  \Z)$. Assuming that $\alpha$ is primitive (that is, neither $\alpha$
  nor $\alpha + T$ is twice a point over $\Q$) and that the image of
  the ordinary mod $2^{k}$ Galois representation is as large as
  possible (subject to $E$ having a rational point of order $2$), we
  determine that there are $63$ possibilities for the image of
  $\tau_{E,2^{k}}$. As a consequence, the density of primes $p$ for
  which the order of $\alpha$ is odd is between $1/14$ and $89/168$.
\end{abstract}

\maketitle

\section{Introduction and Statement of Results}

Let $E/\Q$ be an elliptic curve and $\alpha \in E(\Q)$ be a point of infinite
order. For each prime $p$ for which $E/\F_{p}$ has good reduction, the
reduced point $\overline{\alpha} \in E(\F_{p})$ has finite order, say $M_{p}$.
What can one say about the sequence of numbers $M_{p}$ as $p$ varies?

We are particularly interested in how often the number $M_{p}$ is
odd. More precisely, let $S$ denote the set of primes $p$ for which $p
\nmid N(E)$ and for which $M_{p}$ is odd. We are interested in the
relative density of $S$ within the set of primes, namely $\lim_{x \to
  \infty} \frac{\pi_{S}(x)}{\pi(x)}$.  Naively, one would expect this
to occur 50 percent of the time, but this is often not the case. For
example, in \cite{Jones}, the authors show that if $E : y^{2} + y =
x^{3} - x$ and $\alpha = (0,0)$, then
\[
\lim_{x \to \infty} \frac{\pi_{S}(x)}{\pi(x)} = \frac{11}{21} \approx 0.52381.
\]
Moreover, for each positive integer $k$, there is a Galois
representation (depending both on $E$ and the point $\alpha$)
$\tau_{E,2^{k}} : \Gal(\overline{\Q}/\Q) \to (\Z/2^{k}\Z)^{2} \rtimes
\GL_{2}(\Z/2^{k} \Z) := \AGL_{2}(\Z/2^{k} \Z)$, and Theorem 3.2 of
\cite{Jones} implies that the relative density of the set $S$ only
depends on the image of $\tau_{E,2^{k}}$ (for all $k$).

Variations of this problem have been studied by several authors. (See for
example \cite{Pink}, \cite{PeruccaIMRN}, \cite{LombardoPerucca}, and \cite{DKR}.) In \cite{2015REU}, the authors study the problem of determining the image of $\tau$ subject to the constraints that (i) the usual mod $2^{k}$
Galois representation $\rho_{E,2^{k}} : \Gal(\overline{\Q}/\Q) \to \GL_{2}(\Z/2^{k} \Z)$ is surjective for all $k$, and (ii) $\alpha \in E(\Q)$ is not equal to $2 \gamma$ for any $\gamma \in E(\Q)$. They show that under these hypotheses, there are two possibilities for the image of $\tau$: $\AGL_{2}(\Z/2^{k} \Z)$, and an index $4$ subgroup thereof. In the latter case, the density is $179/336$.

The goal of the present paper is to study the situation when the image of
$\rho_{E,2^{k}}$ is
\[
\Gamma_{0}(2) := \left\{ \begin{bmatrix} a & b \\ c & d \end{bmatrix} \in \GL_{2}(\Z/2^{k} \Z) : c \equiv 0 \pmod{2} \right\}.
\]
In this situation, $E$ has a rational point of order $2$, which we refer to
as $T$. We wish to impose the ``primitivity condition'' that neither $\alpha$
nor $\alpha+T$ is equal to $2 \gamma$ for some $\gamma \in E(\Q)$. This condition is automatically satisfied if we choose $\alpha$ to be a generator of the
group $E(\Q)$. This assumption means that there are only finitely many
possibilities for the image of $\tau_{E,2^{k}}$.

\begin{thm}
\label{main}
Let $E/\Q$ be an elliptic curve with the image of $\rho_{E,2^{k}}$ equal
to $\Gamma_{0}(2)$ for all $k$. Let $T$ denote the unique rational point of
order $2$ in $E(\Q)$, and suppose that $\alpha \in E(\Q)$ has the property
that neither $\alpha$ nor $\alpha+T$ is of the form $2 \gamma$ for $\gamma \in E(\Q)$. Then, there are precisely $63$ possibilities for the image of
$\tau_{E,2^{k}}$ (up to conjugacy in $\AGL_{2}(\Z/2^{k} \Z)$), and there
are $21$ possibilities for the relative density of the set of primes
for which $\alpha$ has odd order modulo $p$, ranging from $1/14 \approx 0.0714$ to $89/168 \approx 0.5298$.
\end{thm}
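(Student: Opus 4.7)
My plan is to reinterpret the image of $\tau_{E,2^{k}}$ cohomologically. Since the $\GL_{2}$-projection of $\tau_{E,2^{k}}$ is all of $\Gamma_{0}(2)_{k}$ (the mod $2^{k}$ reduction), the image $H_{k} := \tau_{E,2^{k}}(\Gal(\overline{\Q}/\Q))$ is a subgroup of the semi-direct product $V_{k} \rtimes \Gamma_{0}(2)_{k}$ with $V_{k} = (\Z/2^{k}\Z)^{2}$, surjecting onto $\Gamma_{0}(2)_{k}$. Such a subgroup is determined by a pair $(W_{k}, [\xi_{k}])$, where $W_{k} = H_{k} \cap V_{k}$ is a $\Gamma_{0}(2)_{k}$-submodule of $V_{k}$ and $[\xi_{k}] \in H^{1}(\Gamma_{0}(2)_{k}, V_{k}/W_{k})$ is the class of the Galois cocycle $\sigma \mapsto \sigma(\alpha_{k}) - \alpha_{k} \pmod{W_{k}}$ for any fixed $2^{k}$-division point $\alpha_{k}$ of $\alpha$. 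Since $\Gamma_{0}(2)_{k}$ is self-normalizing in $\GL_{2}(\Z/2^{k}\Z)$ (its mod-$2$ reduction is the unique Borel fixing the line $\langle e_{1} \rangle$), this data classifies $H_{k}$ up to conjugacy in $\AGL_{2}(\Z/2^{k}\Z)$, so the problem becomes a concrete enumeration.

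Next I would encode the primitivity hypothesis as constraints on $(W_{1}, [\xi_{1}])$. If $\alpha = 2\gamma$ with $\gamma \in E(\Q)$ then $\xi_{1}$ is a coboundary, so $[\xi_{1}] = 0$; if $\alpha + T = 2\gamma$ then $\alpha_{1} = \gamma + T'$ for some $T' \in E[4]$ with $2T' = -T$, and $[\xi_{1}]$ equals the specific class $[\sigma \mapsto \sigma(T') - T']$ attached to $T$. Primitivity therefore rules out these two classes at level $1$. Given the admissible data at level $1$, I would then work upward via inflation-restriction and the short exact sequence $0 \to V_{k-1} \to V_{k} \to V_{1} \to 0$ induced by multiplication by $2$, enumerating the possible lifts at each stage and discarding those that violate primitivity at that level. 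I expect the tower to stabilize at a small level $k_{0}$ (likely $k_{0} = 3$), at which point the total count should match exactly the claimed $63$ conjugacy classes.

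For the densities, I would invoke Theorem 3.2 of \cite{Jones}: the density of primes $p$ for which $\overline{\alpha}$ has odd order equals the limit over $k$ of $|\{h = (v,g) \in H_{k} : v \in (g-I)V_{k}\}|/|H_{k}|$, since the fixed-point condition captures $2^{k}$-divisibility of $\overline{\alpha}$ in $E(\F_{p})$, and divisibility by arbitrarily large powers of $2$ inside a finite group is the same as odd order. Once stabilization is established at $k_{0}$, this reduces to a finite group-theoretic calculation for each of the $63$ images, and aggregating the results should yield the $21$ distinct density values with extrema $1/14$ and $89/168$. I expect the main obstacle to be the lift-enumeration step: because $\Gamma_{0}(2)_{k}$ is nonabelian and $V_{k}$ is nonsemisimple as a module, the cohomology computations, the submodule analysis at each level, and the checking of primitivity for each lift all demand careful hands-on casework, with real risk of overlooking subtle constraints or of double-counting conjugacy classes.
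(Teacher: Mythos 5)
Your cohomological reformulation of the enumeration is sound in principle: a subgroup $H_{k}$ of $V_{k} \rtimes \Gamma_{0}(2)$ surjecting onto $\Gamma_{0}(2)$ is determined by the submodule $W_{k} = H_{k} \cap V_{k}$ together with a class in $H^{1}(\Gamma_{0}(2), V_{k}/W_{k})$, and since $\Gamma_{0}(2)$ is self-normalizing this does classify candidates up to conjugacy in $\AGL_{2}(\Z/2^{k}\Z)$. This parallels what the paper does by brute force (a Magma enumeration of ``happy'' subgroups, with stabilization proved by a Frattini-subgroup argument: any candidate contains the kernel of reduction mod $8$, because for such $M$ the Frattini subgroup $\Phi(M)$ contains the mod-$16$ kernel and the counts at levels $8$ and $16$ agree). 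However, your outline has two genuine gaps. First, the theorem asserts there are \emph{precisely} $63$ possibilities, which requires showing that every group-theoretic candidate is actually realized by some pair $(E,\alpha)$; the paper devotes Section~\ref{interpret} to this, giving for each image an explicit square-class criterion and an explicit curve $[a,c,k]$ attaining it. Your argument, even if completed, yields only an upper bound of $63$. Relatedly, the stabilization level is merely ``expected'' in your plan, whereas it is the crux of the finiteness claim and must be proved (the paper's Theorem 5 plus the agreement of the level-$8$ and level-$16$ counts does this); and the constraint coming from $\alpha+T \notin 2E(\Q)$ involves a cocycle $\sigma \mapsto \sigma(T')-T'$ with $T' \in E[4]$, so it factors through the mod-$4$ Galois group and is not purely a ``level $1$'' condition as you state.

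Second, your density step fails as written. Even after the image stabilizes as the full preimage of a mod-$2^{k_{0}}$ group, the quantity in \eqref{densityeq} is a genuine limit over $k \to \infty$ and is \emph{not} the proportion of pairs $(\vec{v},g)$ in the level-$k_{0}$ group with $\vec{v} \in \Row(g-I)$: that condition is not a union of congruence classes mod $2^{k_{0}}$. For instance $(\vec{0},I)$ satisfies it at every finite level, yet only a proper fraction of its lifts do, which is exactly what Lemma~\ref{lem5} computes; similarly the cases $\det(M-I) \equiv 0$ with $M \not\equiv I \pmod{2}$ require the recursive lifting analysis of Lemma~\ref{lem4}. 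Without this (or the equivalent machinery of Lombardo--Perucca), a ``finite group-theoretic calculation'' at level $k_{0}$ would return incorrect densities, so the claimed list of $21$ values with extremes $1/14$ and $89/168$ would not be obtained.
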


We can translate all of the hypotheses in the theorem into purely group theoretic statements. In Section~\ref{grouptheory} we solve the problem of determining
which possible subgroups of $\AGL_{2}(\Z/2^{k} \Z)$ could be the image of
$\tau$ by computing with Magma. In all, we find that there are 63 candidates
as shown in the graph below.

\begin{center}
\begin{figure}[ht]
\includegraphics[width=\textwidth]{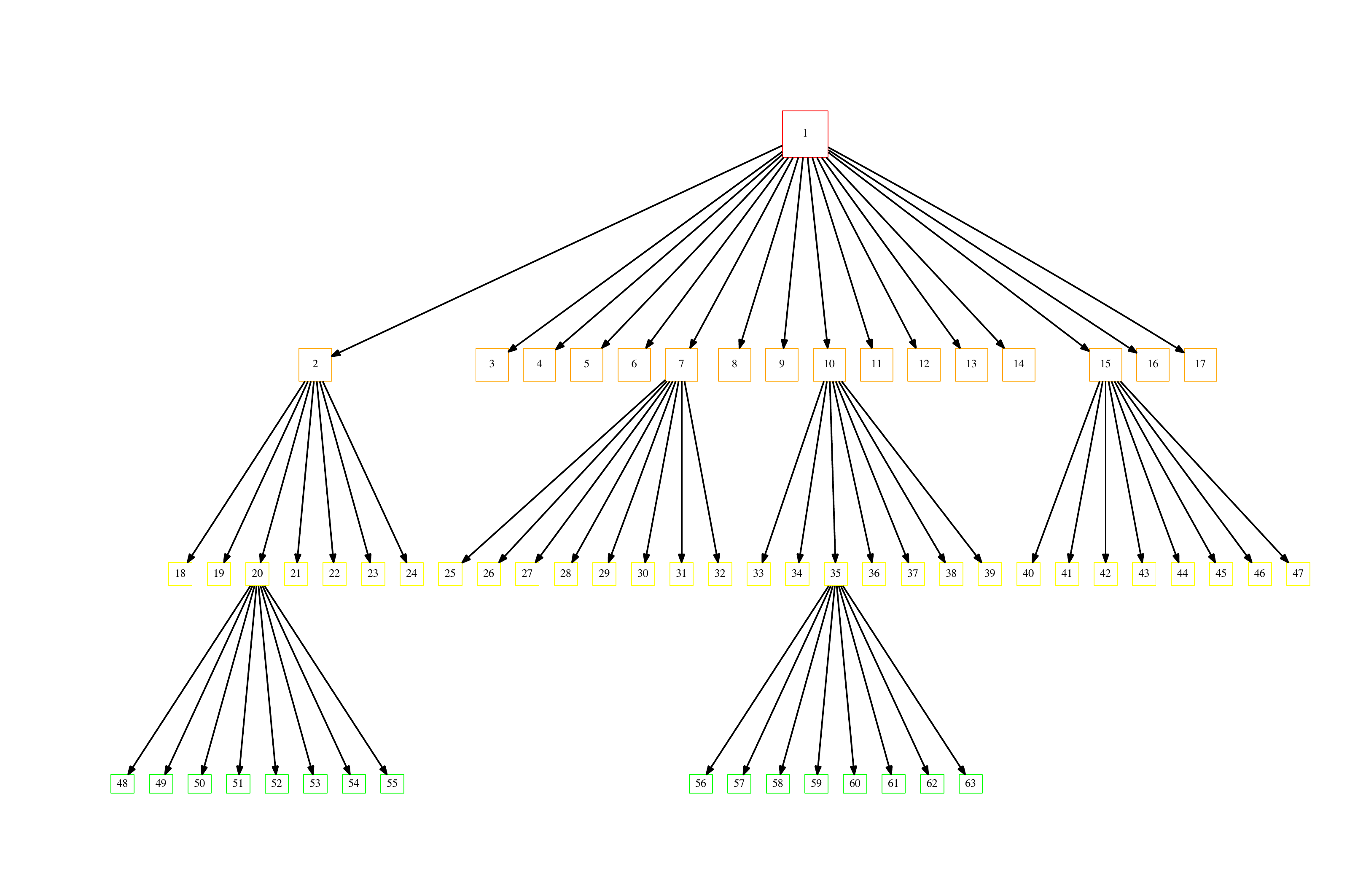}
\end{figure}
\end{center}

In Section~\ref{interpret} we give concrete conditions that describe
when the image of $\tau_{E,2^{k}}$ lies in each of the subgroups
above, and we exhibit a pair $(E,\alpha)$ of an elliptic curve and a
point $\alpha$ with each of the $63$ possible images.  Finally, in
Section~\ref{calc_density} we give a method suited to computer
computation for computing the relative density of the set of primes
where $\alpha$ has odd order, given the image of $\tau_{E,2^{k}}$.

\begin{ack}
The authors used Magma \cite{Magma} (version 2.23-11) for computations. A part of this work constitutes the first author's thesis at Wake Forest University.
\end{ack}

\section{Background}

Let $E/\Q$ be an elliptic curve. For a positive integer $m$, define
$E[m] = \{ Q : mQ = 0 \}$, where $0$ denotes the identity of the group
law. We have that $E[m](\C) \cong (\Z/m\Z)^{2}$. We say that $E$ has
good reduction at $p$ if $E/\F_{p}$ is non-singular, and we say that
$E$ has bad reduction otherwise. If we choose a basis $\langle A, B
\rangle$ for $E[m]$, we get the usual mod $m$ Galois representation
$\rho_{E,m} : \Gal(\Q(E[m])/\Q) \to \GL_{2}(\Z/m\Z)$ given by
$\rho_{E,m}(\sigma) = \begin{bmatrix} a & b \\ c & d \end{bmatrix}$,
where
\[
\sigma(A) = aA + bB \text{ and } \sigma(B) = cA + dB.
\]

If $E/\Q$ is an elliptic curve and $\im \rho_{E,2^{k}} = \Gamma_{0}(2)$,
then $E$ has a rational point of order $2$. By translating this rational
point to the origin, we can assume that $E : y^{2} = x^{3} + ax^{2} + bx$
for some $a, b \in \Z$ and that $T = (0,0)$ is the unique point of order $2$.
The subgroup $\{ (0 : 1 : 0), T \}$ is the kernel of an isogeny
$\phi : E \to E'$ where $E' : y^{2} = x^{3} + (-2a)x^{2} + (a^{2} - 4b)x$,
and
\[
\phi(x,y) = \left(\frac{y^{2}}{x^{2}}, \frac{y(x^{2}-b)}{x^{2}}\right).
\]
The dual isogeny $\psi : E' \to E$ has a similar formula (see \cite{SilvermanTate} page 79). We have $\psi \circ \phi(P) = 2P$ for all $P \in E(\Q)$ and
$\phi \circ \psi(Q) = 2Q$ for all $Q \in E'(\Q)$.

Let $\alpha \in E(\Q)$ be a point of infinite order. For a positive
integer $m$, let $[m^{-1}]\alpha = \{ P \in E(\C) : mP = \alpha \}$ be
the preimage of $\alpha$ under the multiplication by $m$ map. For a
positive integer $k$, there are $4^{k}$ points $\delta \in
[(2^{k})^{-1}] \alpha$.  The coordinates of these points are algebraic
numbers and we define $K_{k}$ to be the field obtained by adjoining to
$\Q$ these coordinates. The extension $K_{k}/\Q$ is Galois. For each
positive integer $k$, we choose one point $\beta_{k}$ so that $2^{k}
\beta_{k} = \alpha$ and we choose these in such a way that $2
\beta_{k} = \beta_{k-1}$. If $\sigma \in \Gal(K_{k}/\Q)$, 
$\sigma(\beta_{k})$ is also in $[(2^{k})^{-1}] \alpha$ and hence
we can write
\[
\sigma(\beta_{k}) = \beta_{k} + eA + fB,
\]
where $\langle A, B \rangle = E[2^{k}]$. The map $\tau_{E,2^{k}} : \Gal(K_{k}/\Q) \to \AGL_{2}(\Z/2^{k} \Z)$ given by
\[
\tau_{E,2^{k}}(\sigma) = \left(\begin{bmatrix} e & f \end{bmatrix},
  \rho_{E,2^{k}}(\sigma)\right)
\]
is the arboreal Galois representation. Here $\rho_{E,2^{k}}$ is as
given above. We think of $\begin{bmatrix} e & f \end{bmatrix} \in
(\Z/2^{k} \Z)^{2}$ as being a row vector. The group operation
on $\AGL_{2}(\Z/2^{k} \Z)$ is given by $(\vec{v}_{1},M_{1}) * (\vec{v}_{2},M_{2})
= (\vec{v}_{2} + \vec{v}_{1} M_{1}, M_{1} M_{2})$. This makes it
so that the action of $\sigma \in \Gal(K_{k}/\Q)$ on $[(2^{k})^{-1} \alpha]$
is equivalent to the action of the function $\vec{v} + \vec{x} M$ on
row vectors $\vec{x} \in (\Z/2^{k} \Z)^{2}$ (where $(\vec{v},M) = \tau_{E,2^{k}}(\sigma)$). Moreover, the map
\[
\left(\begin{bmatrix} e & f \end{bmatrix}, \begin{bmatrix} a & b \\ c & d \end{bmatrix} \right) \mapsto \begin{bmatrix}
  a & b & 0\\
  c & d & 0 \\
  e & f & 1 \end{bmatrix}
\]
from $\AGL_{2}(\Z/2^{k} \Z)$ to $\GL_{3}(\Z/2^{k} \Z)$ is a homomorphism, and
it is convenient to think of $\AGL_{2}(\Z/2^{k} \Z)$ as a subgroup
of $\GL_{3}(\Z/2^{k} \Z)$ for computational purposes. Note
that $\AGL_{2}(\Z/2^{k} \Z)$ consists of pairs of vectors in $(\Z/2^{k} \Z)^{2}$
and matrices $M \in \GL_{2}(\Z/2^{k} \Z)$ and so $|\AGL_{2}(\Z/2^{k} \Z)| =
2^{2k} \cdot |\GL_{2}(\Z/2^{k} \Z)| = 3 \cdot 2^{6k-3}$.

We wish to give a brief accounting of the connection between the
image of $\tau_{E,2^{k}}$ and the relative density of primes $p$ for which
$\alpha \in E(\F_{p})$ has odd order. For a more detailed explanation,
see Theorem 5.1 of \cite{2015REU}.

Let $p$ be an odd prime with $p \nmid N(E)$. Then it is
straightforward to see that $\alpha \in E(\F_{p})$ has odd order if
and only if for all $k \geq 1$, there is some point $\beta_{k} \in
[(2^{k})^{-1}]\alpha \cap E(\F_{p})$. This corresponds to some element
of $[(2^{k})^{-1}] \alpha$ ``having coordinates in $\F_{p}$''. More
precisely, if $\mathcal{O}_{K_{k}}$ is the ring of algebraic integers
in $K_{k}$, and $\mathfrak{p}$ is a prime ideal above $p$ and
$\sigma_{\mathfrak{p}} \in \Gal(K_{k}/\Q)$ is the corresponding
Frobenius automorphism at $\mathfrak{p}$, saying that some
element of $[(2^{k})^{-1}] \alpha$ ``has coordinates in $\F_{p}$''
is asking for $\sigma_{\mathfrak{p}}$ to fix some element of $[(2^{k})^{-1}] \alpha$. Since the action of $\sigma_{\mathfrak{p}}$ on $[(2^{k})^{-1}] \alpha$ is
equivalent to the action of the function $f(\vec{x}) = \vec{v} + \vec{x} M$
on $(\Z/2^{k} \Z)^{2}$, where $(\vec{v},M) = \tau_{E,2^{k}}(\sigma_{\mathfrak{p}})$,
this is the same as asking for $f(\vec{x})$ to have a fixed point. Equivalently,
$\vec{v} \in \Row(M-I)$. In the case that $\det(M-I) \not\equiv 0 \pmod{2^{k}}$,
$\vec{v} \in \Row(M-I)$ implies that $[(2^{k})^{-1}] \alpha \cap E(\F_{p})$
is nonempty for all $k$ and so $\alpha \in E(\F_{p})$ has odd order.
The relative density of $p$ for which $\det (\rho_{E,2^{k}}(\sigma_{\mathfrak{p}})-I) \equiv 0 \pmod{2^{k}}$ tends to zero as $k \to \infty$. Thus, the Chebotarev
Density theorem implies that
\begin{equation}
\label{densityeq}
\lim_{x \to \infty} \frac{\pi_{S}(x)}{\pi(x)}
= \lim_{k \to \infty} \frac{\# \{ (\vec{v},M) \in \im~\tau_{E,2^{k}} : \vec{v} \in \Row(M-I)\}}{|\im~\tau_{E,2^{k}}|}.
\end{equation}

\section{Group Theory}
\label{grouptheory}
We study the possibilities for the image of
$\tau$, assuming $\im \rho$ is as large as possible subject to $E$
having a rational point of order $2$. Specifically, we will deal with
the case when the image of $\rho_{E,2^k}$ is $\Gamma_{0}(2)$. In
essence, we have a homomorphism $\tau:\Gal(K_k)/\Q\to G$, where $G$ is
some finite group, and we are interested in understanding when $\tau$
is surjective.

\begin{defn}
  Let $\ghappy$ be the subgroup of $\AGL_2(\Z/2^k\Z)$ consisting the matrices
\[\begin{bmatrix}a&b&0\\c&d&0\\e&f&1\end{bmatrix}\]
with $e,f$ any entries in $\Z/2^k\Z$ and $\begin{bmatrix} a&b\\c&d \end{bmatrix}$ is in $\Gamma_0(2)$.
\end{defn}
We have $[\AGL_2(\Z/2^k\Z):\ghappy]=3$, so $|\ghappy|=2^{6k-3}$.

Recall that if $G$ is a finite group, the Frattini subgroup $\Phi(G)$ is the intersection of all maximal subgroups of $G$. Here are two facts about the Frattini subgroup.

\begin{thm}[Theorem $5.2.13$ of {\cite[p.~135]{Robinson}}]
\label{normal}
If $G$ is a finite group and $N$ is a normal subgroup of $G$, then $\Phi(N)\trianglelefteq \Phi(G)$.
\end{thm}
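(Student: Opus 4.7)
The plan is to establish the two assertions of the theorem separately: first that $\Phi(N)$ sits as a subgroup of $\Phi(G)$, and then that it is normal there. The normality half is the easier one, and I would dispose of it as follows. The Frattini subgroup $\Phi(N)$ is defined intrinsically inside $N$ as the intersection of all maximal subgroups of $N$, and this collection of subgroups is permuted by every automorphism of $N$; hence $\Phi(N)$ is \emph{characteristic} in $N$. Since $N \trianglelefteq G$, conjugation by an arbitrary $g \in G$ restricts to an automorphism of $N$, which must therefore preserve $\Phi(N)$ setwise. Consequently $\Phi(N) \trianglelefteq G$, and in particular $\Phi(N)$ is normal in any intermediate subgroup that contains it, which will include $\Phi(G)$ once the containment is in hand.

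For the containment $\Phi(N) \subseteq \Phi(G)$, I would argue by contradiction, using the standard characterization of the Frattini subgroup as the set of non-generators. Suppose $\Phi(N) \not\subseteq \Phi(G)$, and pick a maximal subgroup $M < G$ that fails to contain $\Phi(N)$. Then the subgroup $\langle \Phi(N), M \rangle$ properly contains $M$, so maximality forces $\langle \Phi(N), M \rangle = G$. Because $\Phi(N)$ is already normal in $G$, the set product $\Phi(N) M$ is a subgroup, and thus $\Phi(N) M = G$. Intersecting with $N$ and invoking Dedekind's modular law, which applies because $\Phi(N) \subseteq N$, yields $N = \Phi(N)(N \cap M)$. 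Now I apply the non-generator property of elements of $\Phi(N)$ inside $N$: any generating set of $N$ remains a generating set after deleting its $\Phi(N)$-components. Hence $N = N \cap M$, i.e., $N \subseteq M$, and in particular $\Phi(N) \subseteq N \subseteq M$, contradicting the choice of $M$.

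The only ingredient that demands genuine work is the non-generator lemma itself, namely that $x \in \Phi(N)$ implies $\langle S, x \rangle = N \Rightarrow \langle S \rangle = N$, and I expect this to be the main conceptual obstacle. Fortunately, in the finite setting here it is routine: if $\langle S \rangle \neq N$, then $\langle S \rangle$ is contained in some maximal subgroup $M'$ of $N$, and since $x \in \Phi(N) \subseteq M'$ we conclude $\langle S, x \rangle \subseteq M' \neq N$. Once this lemma is in hand, the rest of the proof is a direct manipulation of definitions together with Dedekind's law, and does not interact at all with the specific structures $\AGL_2(\Z/2^k\Z)$ or $\ghappy$ appearing in the surrounding paper — it is purely a fact about finite groups and normal subgroups.
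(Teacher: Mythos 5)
The paper does not prove this statement at all: it is quoted verbatim as Theorem 5.2.13 of Robinson's textbook and used as a black box, so there is no in-paper argument to compare yours against. Your proof is the standard one and is complete and correct: $\Phi(N)$ is characteristic in $N$, hence normal in $G$; then the contradiction via a maximal subgroup $M$ of $G$ with $\Phi(N)M = G$, Dedekind's modular law giving $N = \Phi(N)(N\cap M)$, and the non-generator property (which you correctly justify using finiteness to guarantee maximal subgroups exist) forces $N \subseteq M$, a contradiction. Nothing is missing.
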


\begin{thm}[Theorem $5.3.1$ of {\cite[p.~139]{Robinson}}]
\label{square}
Let $G$ be a group whose order is a power of $2$. If $M \subseteq G$ is a maximal subgroup, then $|G:M|=2$. As a consequence, if $x\in G$, then $x^2 \in \Phi(G)$.
\end{thm}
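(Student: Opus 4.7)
The plan is to prove the two parts of Theorem~\ref{square} in sequence: first the index claim, then deduce the squaring consequence.

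For the index claim, I would rely on the standard ``normalizer grows'' property of finite $p$-groups: for any proper subgroup $H < G$ with $|G|$ a power of $p$, one has $H \lneq N_{G}(H)$. A convenient way to establish this is by letting $G$ act by conjugation on the set of left cosets of $H$ and using that the orbit-stabilizer/class-equation argument forces the number of fixed cosets (which equals $[N_{G}(H):H]$) to be divisible by $p$ and at least $1$. Applied to a maximal subgroup $M$ of the $2$-group $G$, this forces $N_{G}(M) = G$, so $M \trianglelefteq G$. The quotient $G/M$ is then a nontrivial $2$-group with no proper nontrivial subgroups (by maximality of $M$), hence cyclic of prime order, and that prime must be $2$. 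Therefore $[G:M] = 2$.

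For the consequence, I would argue that for every maximal subgroup $M$ of $G$, the quotient $G/M$ has order $2$, so every element of $G/M$ squares to the identity. Translating back, $x^{2} \in M$ for each $x \in G$ and each maximal subgroup $M$. Taking the intersection over all maximal subgroups of $G$ gives $x^{2} \in \bigcap_{M} M = \Phi(G)$, as desired.

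The main (and really only) obstacle is the normalizer-growth lemma for $p$-groups; once that is in hand, the rest is a short chain of deductions. Since the theorem is cited directly from Robinson, I would in practice simply invoke the reference, but the sketch above is the route I would take if asked to reproduce a self-contained proof.
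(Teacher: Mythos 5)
Your proposal is correct in substance, but note that the paper offers no proof of this statement at all: it is imported verbatim as Theorem 5.3.1 of Robinson, exactly as you anticipate in your closing remark. So there is no ``paper's route'' to compare against beyond the citation; what you have written is the standard self-contained argument, and it works. One small slip worth fixing: you describe the normalizer-growth lemma as coming from ``$G$ acting by conjugation on the set of left cosets of $H$,'' but conjugation by $G$ does not give a well-defined action on the cosets of a non-normal $H$ (it sends $xH$ to a coset of $gHg^{-1}$). The argument you actually want --- and the one your parenthetical already points to --- is to let $H$ act on $G/H$ by left translation; the fixed cosets are precisely those $gH$ with $g \in N_{G}(H)$, so there are $[N_{G}(H):H]$ of them, and the counting mod $2$ (the coset $H$ is fixed and $2 \mid [G:H]$) forces $[N_{G}(H):H] \geq 2$. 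With that correction, the chain maximal $\Rightarrow$ normal of index $2$ $\Rightarrow$ $x^{2} \in M$ for every maximal $M$ $\Rightarrow$ $x^{2} \in \Phi(G)$ is complete, and it is exactly what the paper needs for its Frattini-subgroup computation in Section~\ref{grouptheory}.
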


The next result gives us a method for finding all maximal subgroups
of a subgroup $M \subseteq \ghappy$ that contains the kernel of reduction mod $8$.

\begin{thm}
For any $k\geq 3$, if $M\subseteq \ghappy$ and $M$ contains $\{(\vec{v},M):\vec{v}\equiv 0 \pmod{8}, M \equiv I \pmod{8}\}$, then $\Phi(M) \supseteq \{(\vec{v},M):\vec{v}\equiv 0 \pmod{16}, M \equiv I \pmod{16}\}$.
\end{thm}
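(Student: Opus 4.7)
The plan is to apply Theorems~\ref{normal} and~\ref{square} to the normal $2$-subgroup
\[
K_{8} := \{(\vec{v},N) \in \ghappy : \vec{v} \equiv \vec{0} \pmod{8},\ N \equiv I \pmod{8}\}
\]
of $M$, and then to exhibit enough squares in $K_{8}$ to generate the target $K_{16}$, defined analogously with $16$ in place of $8$. The hypothesis gives $K_{8} \subseteq M$, and since $K_{8}$ is the kernel of reduction modulo $8$ on $\ghappy$ it is normal in $\ghappy$, and so in $M$. Because $|\ghappy|=2^{6k-3}$, every subgroup in sight is a $2$-group. Theorem~\ref{normal} then yields $\Phi(K_{8}) \subseteq \Phi(M)$, and Theorem~\ref{square} says every square in $K_{8}$ lies in $\Phi(K_{8})$. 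It therefore suffices to show that the squares of elements of $K_{8}$ generate $K_{16}$.

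Writing an element of $K_{8}$ as $(8\vec{w},\, I+8A)$, where $\vec{w}$ is a row vector and $A$ is a $2\times 2$ matrix (both taken modulo $2^{k-3}$, with no constraint since the $(2,1)$ entry of $I+8A$ is automatically even), a direct computation in the affine group law yields
\[
(8\vec{w},\, I+8A)^{2} \;=\; \bigl(16\vec{w}(I+4A),\ I + 16(A+4A^{2})\bigr),
\]
which already shows that every square lies in $K_{16}$. Two specializations are key. Setting $A=0$ realizes $(16\vec{w},\, I)$ for every $\vec{w}$. Setting $\vec{w}=\vec{0}$ realizes $(\vec{0},\, I+16(A+4A^{2}))$ for every $A$; since the map $A \mapsto A+4A^{2}$ is congruent to the identity modulo~$4$, a $2$-adic Hensel-style lifting argument shows that it surjects from $\mathrm{Mat}_{2}(\Z/2^{k-3}\Z)$ onto $\mathrm{Mat}_{2}(\Z/2^{k-4}\Z)$, so every $(\vec{0},\, I+16D)$ is also a square.

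Finally, multiplying the two families in $\AGL_{2}(\Z/2^{k}\Z)$ gives
\[
(\vec{0},\, I+16D)\cdot(16\vec{u},\, I) \;=\; (16\vec{u},\, I+16D),
\]
which exhausts $K_{16}$ as $\vec{u}$ and $D$ vary. Hence $K_{16}$ is contained in the subgroup of $K_{8}$ generated by squares, which sits inside $\Phi(K_{8}) \subseteq \Phi(M)$, completing the argument. The only mildly technical step is the $2$-adic surjectivity of $A \mapsto A + 4A^{2}$, which is a standard Hensel induction: the identity-mod-$4$ behavior together with the fact that $4A^{2}$ perturbations have no effect on $A\pmod 4$ allow one to correct one bit at a time. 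Everything else is bookkeeping with the $\AGL_{2}$ group law and the two cited Frattini facts.
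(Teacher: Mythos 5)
Your proposal is correct and takes essentially the same route as the paper's proof: both pass through $\Phi(K_{8})\subseteq\Phi(M)$ via Theorems~\ref{normal} and~\ref{square} and then realize the generators of the mod-$16$ kernel as squares (or products of squares) of elements of the mod-$8$ kernel, splitting into the translation part $(16\vec{w},I)$ and the linear part $(\vec{0},I+16D)$. The only cosmetic difference is that the paper handles the error term $2^{2r-2}g^{2}$ by a descending induction on the level $l$, whereas you package the same successive approximation as a Hensel-lifting surjectivity of $A\mapsto A+4A^{2}$.
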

\begin{proof}
If $N=\{(\vec{v},M):\vec{v}\equiv 0\pmod{8}, M\equiv I \pmod{8}\}$, then $N$ is the kernel of the map $\phi: G\to \AGL_2(\Z/8\Z)$ with $\phi((\vec{v},M))=(\vec{v} \pmod 8, M \pmod 8)$, so $N$ is a normal subgroup of $G$. Since $|N|=2^{6k-20}$ so that $N$ is a power of 2, we have that if $x\in N$, then $x^2\in \Phi(N)$ by Theorem~\ref{square}. If $\vec{v}\equiv 0 \pmod{16}$, then $\vec{v}=2\vec{w}$ with $x=(\vec{w},I)\in N$. So, we have $x^2=(2\vec{w},I)=(\vec{v},I)\in \Phi(N)$.

We claim that every element $(\vec{0},I+2^l g)$ is in $\Phi(N)$ for $4\leq l\leq k$, where $g$ is any $2\times 2$ matrix with entries in $\Z/2^{k-l}\Z$.

We prove this by backwards induction on $l$. The case that $l=k$ is the base case and is trivial. Assume that for all $l>r$, every element $(\vec{0},I+2^l g)$ is in $\Phi(N)$. We prove the same for $l=r$. We have $x=(\vec{0}, I+2^{r-1}g)\in N$ and so $x^2=(\vec{0}, (I+2^{r-1}g)^2)=(\vec{0}, I+2^r g+2^{2r-2}g^2)\in \Phi(N)$. So we have
\[I+2^r g+2^{2r-2}g^2\equiv I + 2^r g \pmod{2^{2r-2}}.\]
Since $r\geq 4$, we have that $2r-4\geq r$ and by the induction hypothesis, every matrix $h$ congruent to the identity mod ${2^{2r-2}}$ has $(\vec{0},h)\in \Phi(N)$. This completes the induction.	

It follows that $\Phi(N)$ contains all the elements of the form $(0,I+2^4g)$ and all elements of the form $(\vec{v}, I)$ where $\vec{v}\equiv 0 \pmod{16}$. Thus, it contains all elements of the form
\[(0,I+2^4g)*(\vec{v},I)=(\vec{v}+\vec{0}I,I+2^4g)=(\vec{v}, I+2^4g).\]
This shows that $\Phi(N)$ contains all of these elements and by Theorem~\ref{normal}, $\Phi(N)\subseteq \Phi(M)$. 
\end{proof}

Here are our computation steps in Magma, from which we can know there are only $63$ possible candidates:
\begin{enumerate}
\item We start with $\ghappy$ and compute its maximal subgroups. We say a subgroup $M$ is \emph{happy} if the image of $M$ inside $\GL_2(\Z/2^k\Z)$ is $\Gamma_0(2)$, $M\not\subseteq\left\{\begin{bmatrix}a&b&0\\c&d&0\\e&f&1\end{bmatrix}:e\equiv f\equiv 0 \pmod{2}\right\}$ $(\alpha\not\in 2E(\Q))$ and $M\not\subseteq\left\{\begin{bmatrix}a&b&0\\c&d&0\\e&f&1\end{bmatrix}:e \equiv c/2 \pmod{2}, f \equiv (d-1)/2 \pmod{2}\right\}$ $(\alpha+T\not\in 2E(\Q))$.
\item We make a list of all happy subgroups $\subseteq \AGL_2(\Z/8\Z)$ and we get $63$ subgroups (up to conjugacy).
\item We make a list of all happy subgroups $\subseteq \AGL_2(\Z/16\Z)$ and we get $63$ subgroups (up to conjugacy).
\end{enumerate}
If $M$ is one of these $63$ happy subgroups of $\AGL_2(\Z/8\Z)$ and $K\subseteq M$ is any happy subgroup of it, then $K\subseteq L$ for some maximal subgroup $L$ of $M$. Then, $\Phi(M)\subseteq L$ and $\{(\vec{v},M):\vec{v}\equiv 0 \pmod{16}, M\equiv I \pmod{16}\}\subseteq \Phi(M)$ so that $L$ will show up in our list in step $3$. In other words, $H$ is one of our $63$ subgroups if and only if \[H\supseteq \left\{\begin{bmatrix}a&b&0\\c&d&0\\e&f&1\end{bmatrix}:a\equiv d \equiv 1 \pmod{8}, b\equiv c \equiv e \equiv f \equiv 0\pmod{8}\right\}.\]

For use in the next section, it will be helpful to define certain subgroups and discuss the corresponding subfields.

\begin{defn}
Let $G$ be a finite group and suppose that $N_1$ and $N_2$ are subgroups of $G$ with $|G:N_{1}|=|G:N_{2}|=2$. It follows that $N_1$ and $N_2$ are both normal subgroups. Define $N_1*N_2=\{g\in G:$ either $g\in N_1$ and $g\in N_2$ or $g\not \in N_1$ and $g\not \in N_2\}$.
\end{defn}
\begin{clm}
$N_1*N_2$ is a subgroup of $G$ and $|G:N_1*N_2|=2$.
\end{clm}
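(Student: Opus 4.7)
The plan is to realize $N_1 * N_2$ as the kernel of a homomorphism from $G$ onto $\Z/2\Z$, making it automatically a normal subgroup of index at most $2$. Since $N_i$ has index $2$ in $G$, it is the kernel of a unique surjective homomorphism $\chi_i : G \to \Z/2\Z$, determined by $\chi_i(g) = 0$ iff $g \in N_i$. Because $\Z/2\Z$ is abelian, the sum $\chi := \chi_1 + \chi_2$ is again a group homomorphism $G \to \Z/2\Z$.

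Unwinding the definition, $\chi(g) = 0$ precisely when $\chi_1(g) = \chi_2(g)$, which holds exactly when $g$ lies in both of $N_1, N_2$ or in neither. Thus $N_1 * N_2 = \ker \chi$, which shows it is a (normal) subgroup of $G$. For the index, $|G : \ker \chi|$ is $1$ if $\chi$ is trivial and $2$ otherwise; $\chi$ is trivial iff $\chi_1 = \chi_2$ iff $N_1 = N_2$. Under the tacit assumption $N_1 \neq N_2$ (without which one simply has $N_1 * N_2 = G$), we conclude $|G : N_1 * N_2| = 2$.

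There is no substantive obstacle in the proof; the only thing worth flagging is the degenerate case $N_1 = N_2$, which should implicitly be excluded from the statement. A purely element-level verification by casework on whether $a, b$ lie in each $N_i$ would also work, but packaging the information into the character $\chi_1 + \chi_2$ is shorter and displays the structure (namely, that $G/(N_1 \cap N_2)$ embeds into $\Z/2\Z \times \Z/2\Z$ and $N_1 * N_2$ is the preimage of the diagonal) more transparently.
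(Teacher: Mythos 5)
Your proof is correct and follows essentially the same route as the paper: the paper defines $\phi(x)=(xN_1,xN_2)$ into $(G/N_1)\times(G/N_2)\cong(\Z/2\Z)^2$ and observes that $N_1*N_2$ is the preimage of the diagonal, which is exactly your kernel of $\chi_1+\chi_2$. Your explicit flagging of the degenerate case $N_1=N_2$ (where the index drops to $1$) is a legitimate caveat that the paper's statement and proof both silently omit, though in the paper's application the two subgroups are always distinct.
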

\begin{proof}
Let $\phi: G\to (G/N_1)\times(G/N_2)$ be given by $\phi(x)=(xN_1,xN_2)$. Then, $G/N_1\cong \Z/2\Z$ and $G/N_2\cong \Z/2\Z$. Then, $N_1*N_2=\{g\in G: \phi(g)=(0,0)$ or $\phi(g)=(1,1)\}$ is clearly a subgroup of $G$ of index $2$.
\end{proof}
Suppose $M\subseteq \ghappy$ is the pre-image under $\ghappy \to \Gamma_0(2)$ of a maximal subgroup of $\Gamma_0(2)$. For each $H\subseteq \Gamma_{0}^{\smiley}(2)$ that is happy, with ${\rm im}~ \rho = \Gamma_0(2)$, $M*H$ is a happy subgroup of $\Gamma_{0}^{\smiley}(2)$ with ${\rm im}~ \rho =\Gamma_0(2)$.

If $G=\Gal(K/\Q)$ and $N_1$ and $N_2$ are subgroups of $G$, they correspond to subfields $K_1/\Q$ and $K_2/\Q$. Since $|G:N_1|=2$ and $|G:N_2|=2$, $|K_1:\Q|=|K_2:\Q|=2$. Thus, $K_1=\Q(\sqrt{d_1})$ and $K_2=\Q(\sqrt{d_2})$, where $N_1=\Gal(K/K_1)$ and $N_2=\Gal(K/K_2)$. In this case, $N_1*N_2$ corresponds to $\Q(\sqrt{d_1d_2})$. This is because we have $\sigma(\sqrt{d_1})=\sqrt{d_1}$ if and only if $\sigma\in N_1$; otherwise $\sigma(\sqrt{d_1})=-\sqrt{d_1}$ and $\sigma(\sqrt{d_1})=\sqrt{d_2}$ if and only of $\sigma\in N_2$.

Then, we have that $\sigma(\sqrt{d_1d_2})=\sqrt{d_1d_2}$ if and only if 
\[\frac{\sigma(\sqrt{d_1})}{\sqrt{d_1}}=\frac{\sigma(\sqrt{d_2})}{\sqrt{d_2}},\]
and so $\sigma(\sqrt{d_1 d_2})=\sqrt{d_1d_2}$ if and only if $\sigma\in N_1*N_2$.

\section{Interpretation of the images of $\tau$}
\label{interpret}
In this section, we describe the interpretation of the 63 possible different
images of $\tau_{E,2^{k}}$, up to conjugacy. We will describe the methods
we used to find an interpretation for each of the possible images of
$\tau_{E,2^{k}}$, and end this section by giving a table of images,
interpretations, densities, and elliptic curves that yield
each image.

The lattice of the happy subgroups of $\ghappy$ is the following.

\begin{center}
\begin{figure}[ht]
\includegraphics[width=\textwidth]{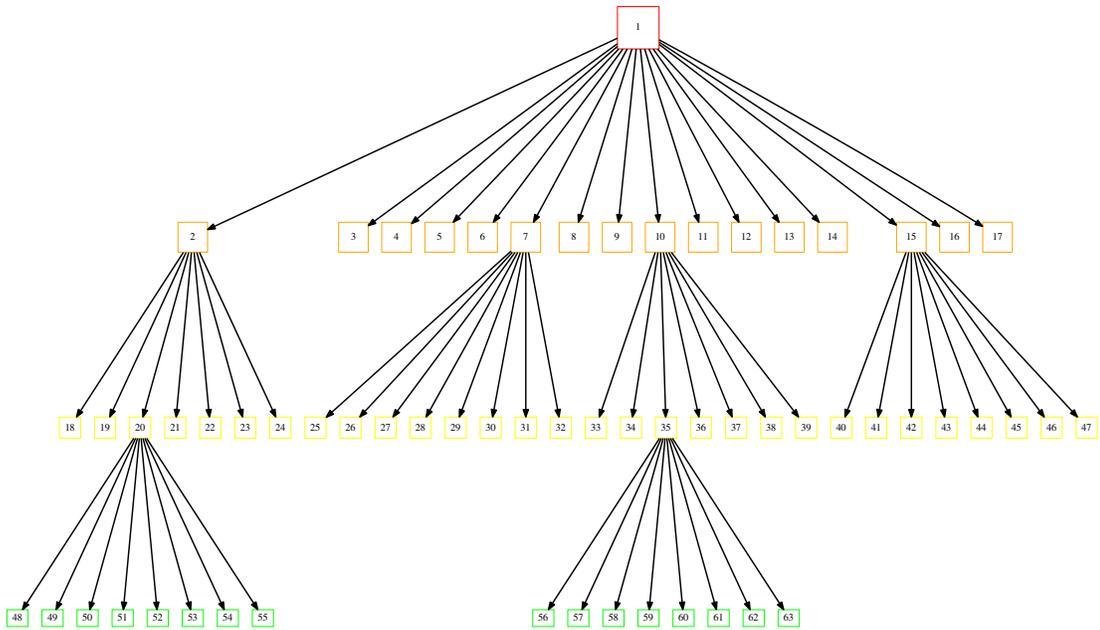}
\caption{Possible images of $\tau_{E,2^{k}}$}
\end{figure}
\end{center}
   
The subgroup $1$ corresponds to $\ghappy$. The sixteen orange boxes on
the second level represent index 2 subgroups of $\ghappy$, while those
boxes on the third and fourth level represent index 4 and index 8
subgroups, respectively.

Suppose that $E/\Q$ is an elliptic curve with a rational point of
order $2$ and $\alpha \in E(\Q)$ is a point of infinite order. Assume
that that $E : y^{2} = x^{3} + ax^{2} + bx$. We denote $\alpha =
(c,ck)$, which forces $b = ck^{2} - ac - c^{2}$. We will specify
curves in this family by the parameters $a$, $c$ and $k$. The group
$\ghappy$ has $31$ maximal subgroups of index $2$. These subgroups
can be generated (using the $*$ operation of Section~\ref{grouptheory})
from five subgroups. One of these is
\[
M = \left\{ \begin{bmatrix} a & b & 0 \\ c & d & 0 \\ e & f & 1 \end{bmatrix} : \begin{bmatrix} a & b \\ c & d \end{bmatrix} \in \Gamma_{0}(2) \text{ and }
e \equiv 0 \pmod{2} \right\}.
\]
The other four are maximal subgroups of the form
\[
\left\{ \begin{bmatrix} a & b & 0 \\ c & d & 0 \\ e & f & 1 \end{bmatrix} :
\begin{bmatrix} a & b \\ c & d \end{bmatrix} \in K \right\}
\]
where $K \subseteq \Gamma_{0}(2)$ is a maximal subgroup. We define $M_{-1}$
to be the maximal subgroup of this form corresponding to
$K = \{ g \in \Gamma_{0}(2) : \det(g) \equiv 1 \pmod{4} \}$, $M_{2}$
to be the maximal subgroup of this form corresponding to
$K = \{ g \in \Gamma_{0}(2) : \det(g) \equiv \pm 1 \pmod{8} \}$,
$M_{b}$ to correspond to $K = \left\{ \begin{bmatrix} a & b \\ c & d \end{bmatrix} \in \Gamma_{0}(2) : c \equiv 0 \pmod{4} \right\}$ and
$M_{a^{2}-4b}$ to correspond to $K = \left\{ \begin{bmatrix} a & b \\ c & d \end{bmatrix} \in \Gamma_{0}(2) : b \equiv 0 \pmod{2} \right\}$. For $d \in \{ -1,2,b,a^{2}-4b \}$ the fixed field of $\tau_{E,2^{k}}^{-1}(M_{d})$ is $\Q(\sqrt{d})$.

Let $H_{i}$ denote the $i$th subgroup in the chart above. (For a Magma
file that contains generators of each subgroup, as well as other
scripts and log files related to the computations in this section, see
the page \url{http://users.wfu.edu/rouseja/liang-rouse/}.)

The subgroup $H_{2}$ equals the subgroup $M$
defined above. We have that $\im \tau \subseteq H_{2}$ if and only if
for all $\sigma \in \Gal(K_{k}/\Q)$, $\sigma(\beta_{1}) = \beta_{1}$
or $\beta_{1} + T$. This implies that $\alpha' = \phi(\beta_{1}) =
\phi(\beta_{1} + T) \in E'(\Q)$ is rational. Since $\psi(\alpha') =
\alpha$, this implies that the $x$-coordinate of $\alpha$, $c$, is a
perfect square. Hence, $\im \tau \subseteq H_{2}$ if and only if $c$
is a perfect square. This gives the interpretation for subgroup $2$.
It follows from this that $\im \tau = \ghappy$ if and only if
$\langle -1, 2, b, a^{2} - 4b \rangle \subseteq (\Q^{\times})/(\Q^{\times})^{2}$
has order $32$ (which is equivalent to there being no multiplicative
relations, mod squares, involving $-1$, $2$, $b$, $a^{2}-4b$ and $c$).

Given a number $d \in \langle -1, 2, b, a^{2} - 4b \rangle$, we let
$M_{d} \subseteq \ghappy$ denote the maximal subgroup with the property
that $g \in M_{d}$ if and only if when $\tau_{E,2^{k}}(\sigma) = g$,
we have that $\sigma$ fixes $\sqrt{d}$. That is, $M_{d}$ is the subgroup of
$\ghappy$ corresponding to $\Q(\sqrt{d})$. It is not hard to see that
$\im \tau_{E,2^{k}} \subseteq H_{2} * M_{d}$ if and only if $cd$ is a perfect
square, and this furnishes the interpretations for subgroups $3$ through $17$.
We will use this trick for other batches of subgroups (18-24, 25-32, 33-39, 40-47, 48-55, and 56-63). In particular, we will compute the interpretation
for one such subgroup, and then use this group-theoretic method to find
the interpretation for all other subgroups in that batch.

It follows from the intrepretations above that maximal subgroups
of subgroups 2 and 10 arise because either the point $\alpha$ or $\alpha+T$
is the image of a rational point under the isogeny $\psi : E' \to E$. The
situation where subgroups 7 and 15 is a bit more mysterious, and we will
take a moment to explain where those maximal subgroups come from. First,
the discriminant of the curve $E$ is $\Delta(E) = 16 b^{2} (a^{2} - 4b)$, and an examination
of the classification of $2$-adic images of elliptic curves over $\Q$ by the second author and David Zureick-Brown (see \cite{RZB}) shows that for any
curve $E/\Q$, $\sqrt[4]{\Delta(E)} \in \Q(E[4])$. (In fact, the modular curve $X_{2a}$ of \cite{RZB} parametrizes elliptic curves whose discriminant is a fourth power.) Now, the image of $\tau$ is contained in subgroup $15$ if and only if $\frac{-(a^{2}-4b)}{c} = d^{2}$ for some $d \in \Q$, and in this case
the field $\Q(\beta_{1})$ obtained by adjoining a preimage of $\alpha$ under multiplication by $2$ is also a radical extension: we have $\Q(\beta_{1})
= \Q(\sqrt[4]{c} \sqrt{2cd + 4kc})$. This implies that the quantity
\[
  \frac{-(a^{2} - 4b)}{c (2cd+4kc)^{2}}
\]
is a square in $\Q$, but is a fourth power in $\Q(\beta_{1},E[8])$, and this means that
\[
\Q\left(\sqrt[4]{ \frac{-(a^{2} - 4b)}{c (2cd+4kc)^{2}}}\right)/\Q
\]
gives rise to a quadratic extension which examples show can be independent
of the other $15$ quadratic subextensions coming from square roots of
products of $\{-1,2,b,a^{2}-4b\}$. This explains the presence of the maximal
subgroups of subgroup $15$. (For the maximal subgroups of group $7$,
one can consider instead the field obtained by taking a preimage
of $\alpha+T$.)

To compute the interpretation for one subgroup in each of our batches,
we will use a technique from \cite{DokDok} (see the proof of the Lemma
on page 962) and \cite{2015REU} (see the proof of Lemma 9.1). Given a
subgroup $H \subseteq \AGL_{2}(\Z/2^{k} \Z)$ and an elliptic curve $E$
(depending on the parameters $a$, $c$ and $k$), we wish to compute a
polynomial $f_{a,c,k}(x) \in \Z[a,c,k][x]$ so that $f_{a,c,k}(x)$ has
a rational root if and only if the image of $\tau_{E,2^{k}}$ is
contained in $H$. Let $\beta_{k}$ denote a point so that $2^{k}
\beta_{k} = \alpha$, and let $A$ and $B$ be points which generate
$E[2^{k}] \cong (\Z/2^{k} \Z)^{2}$. For each right coset $C$ of $H$ in
$\AGL_{2}(\Z/2^{k} \Z)$, let
\[
  \zeta_{C} = \sum_{\sigma \in C} \sigma(x(\beta_{2} + A)) \sigma(x(\beta_{2} + B)) \sigma(x(\beta_{2} + A + B))^{2}.
\]
Then, let $f_{a,c,k}(x) = \prod_{C} (x - \zeta_{C})$. A straightforward computation shows that the $x$-coordinates of the points $\beta_{k} + r_{1} A + r_{2} B$,
$0 \leq r_{1}, r_{2} < 2^{k}$ are integral over $\Z[a,c,k]$ and it follows
that $f_{a,c,k}(x) \in \Z[a,c,k][x]$. Moreover, the change of variables
$(x,y) \mapsto (t^{2} x, t^{3} y)$ sends $a \mapsto t^{2} a$,
$c \mapsto t^{2} c$ and $k \mapsto tk$. For this reason, we say that
$a$ and $c$ have weight $2$ and $k$ has weight $1$. The map
$(x,y) \mapsto (t^{2} x, t^{3} y)$ sends $\zeta_{C}$ to $t^{8} \zeta_{C}$,
and it follows from this that the weight of the coefficient of $x^{i}$
in $f_{a,c,k}$ is $8 \deg(f_{a,c,k}) - 8i$. To find the polynomial
$f_{a,c,k}$ one then simply computes $f_{a,c,k}$ for several triples $(a,c,k)$
(by numerically computing the $x$-coordinates of the $\beta_{k} + r_{1} A + r_{2} B$ using the fact that $E(\C) \cong \C/\Lambda$ for some lattice $\Lambda$),
and uses linear algebra to determine the coefficients of each monomial
of the correct weight in the coefficient of $x^{i}$. This method produces
the polynomial $f_{a,c,k}(x) \in \Z[a,c,k][x]$.

Next, we present tables describing the various images. The table below
gives for each subgroup an element with the property that $\im
\tau_{E,2^{k}}$ is contained in that subgroup if and only if that
element is a square in $\Q$, a curve (specified by $[a,c,k]$) whose
image is that subgroup, the density of odd order reductions associated
with that image of $\tau_{E,2^{k}}$ (for more detail about how these
were computed, see Section~\ref{calc_density}), and the value of
$\pi_{S}(x)/\pi(x)$ for the curve specified with $x = 10^{7}$.

\begin{tabular}{c|cccc}
  Subgroup & Element & Curve & Density & $\pi_{S}(10^{7})/\pi(10^{7})$\\
  \hline
  $1$ & N/A & $[3,3,1]$ & $5/21 \approx 0.238095$ & $0.237796$\\
  $2$ & $c$ & $[-3,1,3]$ & $10/21 \approx 0.476191$ & $0.476216$\\
  $3$ & $-2bc (a^{2} - 4b)$ & $[14,15,6]$ & $5123/21504 \approx 0.238235$ & $0.238055$\\
  $4$ & $-2bc$ & $[3,3,2]$ & $5123/21504 \approx 0.238235$ & $0.238167$\\
  $5$ & $2bc (a^{2}-4b)$ & $[6,-5,2]$ & $5123/21504 \approx 0.238235$ & $0.237885$\\
  $6$ & $2bc$ & $[2,-3,1]$ & $5123/21504 \approx 0.238235$ & $0.238034$\\
  $7$ & $-bc (a^{2} - 4b)$ & $[-5,11,1]$ & $83/336 \approx 0.247024$ & $0.246844$\\
  $8$ & $-bc$ & $[2,3,1]$ & $83/336 \approx 0.247024$ & $0.247027$\\
  $9$ & $bc(a^{2}-4b)$ & $[10,-11,2]$ & $13/42 \approx 0.309524$ & $0.309546$\\
  $10$ & $bc$ & $[2,-5,1]$ & $1/7 \approx 0.142857$ & $0.142902$\\
  $11$ & $-2c(a^{2}-4b)$ & $[14,7,6]$ & $5123/21504 \approx 0.238235$ & $0.238993$\\
  $12$ & $-2c$ & $[1,-2,2]$ & $5123/21504 \approx 0.238235$ & $0.237946$\\
  $13$ & $2c(a^{2}-4b)$ & $[6,3,2]$ & $5123/21504 \approx 0.238235$ & $0.238411$\\
  $14$ & $2c$ & $[2,2,1]$ & $5123/21504 \approx 0.238235$ & $0.238438$\\
  $15$ & $-c(a^{2} - 4b)$ & $[-5,-5,1]$ & $83/336 \approx 0.247024$ & $0.247187$\\
  $16$ & $-c$ & $[-1,-1,1]$ & $83/336 \approx 0.247024$ & $0.246790$\\
  $17$ & $c(a^{2}-4b)$ & $[10,5,2]$ & $1/7 \approx 0.142857$ & $0.143135$\\
\end{tabular}

Subgroup $18-24$ arise as images when $c = s^{2}$ is a perfect square.
In this case, there are two points $\alpha' \in E'(\Q)$
so that $\psi(\alpha') = \alpha$. We have
\[
\alpha' = (a+2s^{2}+2sk, 2s(a + 2s^{2} + 2sk))
\]
and the two choices of $\alpha'$ correspond to the choice of the sign of $s$.
In the table below, we specify the square class of $x(\alpha')$
and this means that the image of $\tau$ is contained in the corresponding
subgroup when \emph{either} choice of $\alpha'$ results in $x(\alpha')$ being
in the correct square class.

\begin{tabular}{c|cccc}
  Subgroup & Element & Curve & Density & $\pi_{S}(10^{7})/\pi(10^{7})$\\
  \hline
  $18$ & $-2b x(\alpha')$ & $[7,16,3]$ & $5123/10752 \approx 0.476470$ & $0.477010$\\
  $19$ & $2b x(\alpha')$ & $[-3,1,2]$ & $5123/10752 \approx 0.476470$ & $0.476296$\\
  $20$ & $-b x(\alpha')$ & $[28,36,1]$ & $83/168 \approx 0.494048$ & $0.494378$\\
  $21$ & $b x(\alpha')$ & $[-5,1,4]$ & $19/42 \approx 0.452381$ & $0.452681$\\
  $22$ & $-2 x(\alpha')$ & $[2,1,3]$ & $5123/10752 \approx 0.476470$ & $0.476195$\\
  $23$ & $2 x(\alpha')$ & $[-4,1,2]$ & $5123/10752 \approx 0.476470$ & $0.476213$\\
  $24$ & $-x(\alpha')$ & $[3,1,3]$ & $83/168 \approx 0.494048$ & $0.493984$\\
\end{tabular}

Now we consider the batch of subgroups $25-32$ which are subgroups of
group $7$.  The image of $\tau$ is contained in group $7$ if and only
if $-bc (a^{2}-4b)$ is a square, which is equivalent to $-(a^{2} - 4b)
x(\alpha+T)$ being a square.  We have $x(\alpha+T) = -a -c + k^{2}$
and so we let $-(a^{2} - 4b)(-a-c+k^{2}) = d^{2}$. As above, there are
two choices of $d$. Note that the quantities listed in the Element
table can sometimes equal zero, and in this case, the table below does
not determine the image of $\tau$.

\begin{tabular}{c|cccc}
  Subgroup & Element & Curve & Density & $\pi_{S}(10^{7})/\pi(10^{7})$\\
  \hline
  $25$ & $2d(d+2ak + 2ck - 2k^{3})$ & $[-6,13,2]$ & $2659/10752 \approx 0.247303$ & $0.247257$\\
  $26$ & $-d(d+2ak + 2ck - 2k^{3})$ & $[6,7,4]$ & $89/336 \approx 0.264881$ & $0.265085$\\
  $27$ & $-2d(d + 2ak + 2ck - 2k^{3})$ & $[10,21,6]$ & $2659/10752 \approx 0.247303$ & $0.247638$\\
  $28$ & $d(d + 2ak + 2ck - 2k^{3})$ & $[28,-12,3]$ & $25/112 \approx 0.223214$ & $0.223038$\\
  $29$ & $2bd(d+2ak+2ck-2k^{3})$ & $[15,6,6]$ & $2659/10752 \approx 0.247303$ & $0.247663$\\
  $30$ & $-bd(d + 2ak + 2ck - 2k^{3})$ & $[-210,375,9]$ & $25/112 \approx 0.223214$ & $0.223066$\\
  $31$ & $-2bd(d+2ak+2ck-2k^{3})$ & $[30,10,10]$ & $2659/10752 \approx 0.247303$ & $0.247015$\\
  $32$ & $bd(d+2ak+2ck-2k^{3})$ & $[-55,125,9]$ & $89/336 \approx 0.264881$ & $0.265033$\\
\end{tabular}

Now we consider the batch of subgroups 33-39, which are subgroups of group 10.
This implies that $bc$ is a square, which implies that $x(\alpha+T)$ is
a square and hence there are two points $\alpha' \in E'(\Q)$
with $\psi(\alpha') = \alpha+T$. The table is the following.

\begin{tabular}{c|cccc}
  Subgroup & Element & Curve & Density & $\pi_{S}(10^{7})/\pi(10^{7})$\\
  \hline
  $33$ & $-2bx(\alpha')$ & $[-7,-14,2]$ & $513/3584 \approx 0.143136$ & $0.143191$\\
  $34$ & $2b x(\alpha')$ & $[-3,6,2]$ & $513/3584 \approx 0.143136$ & $0.143121$\\
  $35$ & $-b x(\alpha')$ & $[-40,45,3]$ & $5/42 \approx 0.119048$ & $0.119048$\\
  $36$ & $b x(\alpha')$ & $[10,10,6]$ & $5/42 \approx 0.119048$ & $0.118733$\\
  $37$ & $-2 x(\alpha')$ & $[2,3,3]$ & $513/3584 \approx 0.143136$ & $0.143154$\\
  $38$ & $2 x(\alpha')$ & $[-2,7,3]$ & $513/3584 \approx 0.143136$ & $0.143036$\\
  $39$ & $-x(\alpha')$ & $[2,5,4]$ & $5/42 \approx 0.119048$ & $0.118566$\\
\end{tabular}

We consider the batch of subgroups 40-47, which are subgroups of group 15.
We write $-c(a^{2}-4b) = d^{2}$, and there are two choices for $d$. As before,
if either of these choices for $d$ makes the quantity listed in the Element
column of the table a non-zero square, the image of $\tau$ is contained
in the corresponding subgroup.

\begin{tabular}{c|cccc}
  Subgroup & Element & Curve & Density & $\pi_{S}(10^{7})/\pi(10^{7})$\\
  \hline
  $40$ & $2bcd(d+2ck)$ & $[-45,60,5]$ & $2659/10752 \approx 0.247303$ & $0.247000$\\
  $41$ & $bcd(d+2ck)$ & $[-210,-21,12]$ & $89/336 \approx 0.264881$ & $0.265084$\\
  $42$ & $-2bcd(d+2ck)$ & $[15,15,6]$ & $2659/10752 \approx 0.247303$ & $0.247006$\\
  $43$ & $-bcd(d+2ck)$ & $[-55,11,-9]$ & $89/336 \approx 0.264881$ & $0.264981$\\
  $44$ & $2cd(d+2ck)$ & $[10,5,6]$ & $2659/10752 \approx 0.247303$ & $0.247056$\\
  $45$ & $cd(d+2ck)$ & $[6,3,4]$ & $25/112 \approx 0.223214$ & $0.223415$\\
  $46$ & $-2cd(d+2ck)$ & $[-6,-3,2]$ & $2659/10752 \approx 0.247303$ & $0.247212$\\
  $47$ & $-cd(d+2ck)$ & $[-14,-7,6]$ & $25/112 \approx 0.223214$ & $0.223007$\\
\end{tabular}

For the batch of subgroups 48-55, we have the same sort of phenomenon
happening as with the maximal subgroups of groups 7 and 10, except this
time on $E'$ instead of $E$. The image of $\tau$ is contained in group 20
if $-b x(\alpha')$ is a square, which is the same as saying
that $-(b/c) (a + 2s^{2} - 2sk)$ is a square (where again $c = s^{2}$).
Write $-(b/c) (a + 2s^{2} - 2sk) = d^{2}$ and recall now that there are two
choices for $s$ and two choices for $d$ (per $s$ with $-b (a + 2s^{2} - 2sk)$ a square). If \emph{any} of these choices make it so the quantity in the
Element column is a square, then the image of $\tau$ is contained in the
corresponding subgroup.

\begin{tabular}{c|cccc}
  Subgroup & Element & Curve & Density & $\pi_{S}(10^{7})/\pi(10^{7})$\\
  \hline
  $48$ & $2(a^{2}-4b)(d + x(\alpha+T))$ & $[60,36,9]$ & $2659/5376 \approx 0.494606$ & $0.494232$\\
  $49$ & $b(a^{2}-4b)(d + x(\alpha+T))$ & $[30,121,1]$ & $89/168 \approx 0.529762$ & $0.529399$\\
  $50$ & $-2 (a^{2}-4b)(d + x(\alpha+T))$ & $[90,16,16]$ & $2659/5376 \approx 0.494606$ & $0.494423$\\
  $51$ & $-2b (a^{2}-4b)(d + x(\alpha+T))$ & $[210,81,21]$ & $41/84 \approx 0.488095$ & $0.487864$\\
  $52$ & $-2b (d + x(\alpha+T))$ & $[15,9,2]$ & $2659/5376 \approx 0.494606$ & $0.494824$\\
  $53$ & $-(d+x(\alpha+T))$ & $[-12,16,1]$ & $41/84 \approx 0.488095$ & $0.488500$\\
  $54$ & $2b (d + x(\alpha+T))$ & $[3,1,4]$ & $2659/5376 \approx 0.494606$ & $0.494469$\\
  $55$ & $d + x(\alpha+T)$ & $[-7,16,11]$ & $25/56 \approx 0.446429$ & $0.446331$\\
\end{tabular}

The last batch of subgroups is 56-63, which are subgroups of group 35. For
these groups, $x(\alpha+T) = -a-c+k^{2}$ is a square, so write
$-a-c+k^{2} = s^{2}$. The image of $\tau$ is contained in group $35$ if
and only if $-b x(\alpha')$ is a square, which means
$(a + s^{2} - k^{2})(a + 2s^{2} - 2sk) = d^{2}$. (Again, there are two
choices for $s$, and for a valid choice of $s$, two choices for $d$.)
The table for this batch is the following.

\begin{tabular}{c|cccc}
  Subgroup & Element & Curve & Density & $\pi_{S}(10^{7})/\pi(10^{7})$\\
  \hline
  $56$ & $d + c$ & $[7,112,12]$ & $5/21 \approx 0.238095$ & $0.237694$\\
  $57$ & $2(a^{2}-4b)(d+c)$ & $[-30,-15,6]$ & $643/5376 \approx 0.119606$ & $0.119463$\\
  $58$ & $(a^{2}-4b)(d+c)$ & $[30,-150,1]$ & $1/14 \approx 0.0714285$ & $0.0715225$\\
  $59$ & $-2(a^{2}-4b)(d+c)$ & $[-60,-15,5]$ & $643/5376 \approx 0.119606$ & $0.119643$\\
  $60$ & $-(a^{2}-4b)(d+c)$ & $[210,150,21]$ & $19/168 \approx 0.113095$ & $0.112867$\\
  $61$ & $-2(d+c)$ & $[5,-20,1]$ & $643/5376 \approx 0.119606$ & $0.119774$\\
  $62$ & $-(d+c)$ & $[-12,-3,1]$ & $19/168 \approx 0.113095$ & $0.113223$\\
  $63$ & $2(d+c)$ & $[3,12,4]$ & $643/5376 \approx 0.119606$ & $0.120063$\\
\end{tabular}

\section{Calculating the density}
\label{calc_density}

In this section, we give a convenient way to compute the limit
on the right hand side of \eqref{densityeq},
and in this section we allow $\ell$ to be any prime number and
$G \subseteq \AGL_{2}(\Z/\ell^{r} \Z)$ to be a subgroup of index $m$.
The right hand side of \eqref{densityeq} is
\[
\mathcal{F}(G) = \lim_{k \to \infty} \frac{\# \{ (\vec{v},g) \in \AGL_{2}(\Z/\ell^{k} \Z) : (\vec{v} \bmod \ell^{r}, g \bmod \ell^{r}) \in G \text{ and } \vec{v} \in \Row(g-I) \}}{\# \{ (\vec{v},g) \in \AGL_{2}(\Z/\ell^{k} \Z) : (\vec{v} \bmod \ell^{r}, g \bmod \ell^{r}) \in G \}}.
\]
By Theorem $3.2$ of \cite{Jones} if $E/\Q$ is an elliptic curve
without complex multiplication, $\alpha \in E(\Q)$, and the image of
$\tau_{E,\ell^{k}}$ is the full preimage in $\AGL_{2}(\Z/\ell^{k} \Z)$
of $G$ for all $k$, the density of primes $p$ for which $\alpha \in
E(\F_{p})$ has order coprime to $\ell$ is equal to $\mathcal{F}(G)$.

A general procedure for computing $\mathcal{F}(G)$ with a finite
amount of computation has already been given by Lombardo and Perucca
(see \cite{LombardoPerucca} and \cite{LPNYJM}). They also handle the
cases that arise for elliptic curves with complex multiplication.  We
wish to give a few simple rules that allow for easy computer
computation of $\mathcal{F}(G)$. Given that our results essentially follow
from the work of Lombardo and Perucca, we will not give full details in
the proofs.

For $\vec{v} \in (\Z/\ell^{r} \Z)^{2}$ and $M \in M_{2}(\Z/\ell^{r} \Z)$,
we define
\small
\begin{equation*}
  \mu_{r}(\vec{v},M) = \lim_{k \to \infty}
  \frac{\#\{ (\tilde{v},\tilde{M}) \in (\Z/\ell^{k} \Z)^{2} \times
M_{2}(\Z/\ell^{k} \Z) : (\tilde{v},\tilde{M}) \equiv (\vec{v},M) \pmod{\ell^{r}},
\tilde{v} \in \Row(\tilde{M}-I) \}}{(\# \AGL_{2}(\Z/\ell^{r} \Z)/m) \cdot \ell^{6(k-r)}}.
\end{equation*}
\normalsize
The quantity $\mu_{r}(\vec{v},M)$ is the contribution to the density
of all lifts of $(\vec{v},M) \in G$ and so $\mathcal{F}(G) = \sum_{(\vec{v},M) \in G} \mu_{r}(\vec{v},g)$.

\begin{lem}
\label{lem1}
If $\vec{v} \not\in \Row(M-I)$, then $\mu_{r}(\vec{v},M) = 0$.
\end{lem}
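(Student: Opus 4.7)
The plan is to show that the numerator in the limit defining $\mu_{r}(\vec{v},M)$ is actually zero for every $k \geq r$, so the limit is trivially $0$.

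First, I would unwind what $\tilde{v} \in \Row(\tilde{M}-I)$ means modulo $\ell^{k}$: it says there exists a row vector $\tilde{x} \in (\Z/\ell^{k}\Z)^{2}$ with $\tilde{v} \equiv \tilde{x}(\tilde{M}-I) \pmod{\ell^{k}}$. Since $k \geq r$, reducing this equation modulo $\ell^{r}$ and using $(\tilde{v},\tilde{M}) \equiv (\vec{v},M) \pmod{\ell^{r}}$ yields
\[
\vec{v} \equiv (\tilde{x} \bmod \ell^{r})(M-I) \pmod{\ell^{r}},
\]
so $\vec{v} \in \Row(M-I)$ over $\Z/\ell^{r}\Z$. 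In other words, membership in the row space is preserved under reduction modulo $\ell^{r}$.

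Taking the contrapositive, if $\vec{v} \notin \Row(M-I) \pmod{\ell^{r}}$, then no lift $(\tilde{v},\tilde{M})$ congruent to $(\vec{v},M) \pmod{\ell^{r}}$ can satisfy $\tilde{v} \in \Row(\tilde{M}-I) \pmod{\ell^{k}}$. Consequently the set being counted in the numerator is empty for every $k \geq r$, the numerator is $0$, and the limit $\mu_{r}(\vec{v},M)$ equals $0$.

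There is no real obstacle here; the statement is essentially a formal consequence of the fact that reduction $\Z/\ell^{k}\Z \twoheadrightarrow \Z/\ell^{r}\Z$ is a ring homomorphism and therefore carries row spaces to row spaces. The only minor point to be careful about is to explicitly produce the witness $\tilde{x} \bmod \ell^{r}$ when passing from the lifted equation to the base equation, which the argument above does.
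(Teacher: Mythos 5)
Your proof is correct and follows the same approach as the paper, which simply asserts that no lift of $(\vec{v},M)$ can have $\tilde{v} \in \Row(\tilde{M}-I)$; you have usefully filled in the one-line justification (reduction mod $\ell^{r}$ carries the witness $\tilde{x}$ to a witness for $\vec{v} \in \Row(M-I)$).
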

\begin{proof}
It is easy to see that no lifts $(\tilde{v},\tilde{M})$ of $(\vec{v},M)$
have $\tilde{v} \in \Row(\tilde{M}-I)$ either.
\end{proof}

\begin{lem}
\label{lem2}
If $\vec{v} \equiv 0 \pmod{\ell}$ and $M-I \equiv 0 \pmod{\ell}$,
then
\[
\mu_{r}(\vec{v},M) = \frac{1}{\ell^{6}} \mu_{r-1}\left(\frac{\vec{v}}{\ell},
  \left(\frac{(M-I)}{\ell}\right) + I\right).
\]
\end{lem}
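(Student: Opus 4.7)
The plan is a bijection argument combined with tracking the normalization. Since $\vec{v} \equiv 0$ and $M \equiv I \pmod{\ell}$ and $r \geq 1$, every pair $(\tilde{v}, \tilde{M})$ that lifts $(\vec{v}, M) \pmod{\ell^r}$ automatically satisfies $\tilde{v} \equiv 0$ and $\tilde{M} \equiv I \pmod{\ell}$. I would write $\tilde{v} = \ell \tilde{w}$ and $\tilde{M} = I + \ell \tilde{N}$ with $\tilde{w} \in (\Z/\ell^{k-1}\Z)^2$ and $\tilde{N} \in M_2(\Z/\ell^{k-1}\Z)$, and set $\tilde{M}' := I + \tilde{N}$. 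The map
\[
\Psi : (\tilde{v}, \tilde{M}) \mapsto (\tilde{w}, \tilde{M}')
\]
is a bijection between $\{(\tilde{v}, \tilde{M}) \in (\Z/\ell^k\Z)^2 \times M_2(\Z/\ell^k\Z) : \tilde{v} \equiv 0, \tilde{M} \equiv I \pmod{\ell}\}$ and the full set $(\Z/\ell^{k-1}\Z)^2 \times M_2(\Z/\ell^{k-1}\Z)$, essentially by matching cardinalities.

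Next I would check that $\Psi$ sends the set defining the numerator of $\mu_r(\vec{v}, M)$ at level $k$ onto the set defining the numerator of $\mu_{r-1}(\vec{v}/\ell, I + (M-I)/\ell)$ at level $k-1$. The congruences $\tilde{v} \equiv \vec{v} \pmod{\ell^r}$ and $\tilde{M} \equiv M \pmod{\ell^r}$ translate, upon dividing by $\ell$, to $\tilde{w} \equiv \vec{v}/\ell \pmod{\ell^{r-1}}$ and $\tilde{M}' \equiv I + (M-I)/\ell \pmod{\ell^{r-1}}$. For the row condition, $\tilde{v} \in \Row(\tilde{M}-I)$ says $\ell \tilde{w} \equiv \vec{x}(\ell \tilde{N}) \pmod{\ell^k}$ for some $\vec{x}$; dividing both sides by $\ell$ gives the equivalent statement $\tilde{w} \equiv \vec{x} \tilde{N} \equiv \vec{x}(\tilde{M}'-I) \pmod{\ell^{k-1}}$, which is exactly $\tilde{w} \in \Row(\tilde{M}'-I)$ in $M_2(\Z/\ell^{k-1}\Z)$.

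Finally, I would compare denominators. With the implicit convention that the same subgroup (i.e., the same index $m$) underlies both $\mu_r$ and $\mu_{r-1}$, the denominators are $(|\AGL_2(\Z/\ell^r\Z)|/m) \cdot \ell^{6(k-r)}$ and $(|\AGL_2(\Z/\ell^{r-1}\Z)|/m) \cdot \ell^{6((k-1)-(r-1))}$, and their ratio is $|\AGL_2(\Z/\ell^r\Z)|/|\AGL_2(\Z/\ell^{r-1}\Z)| = \ell^6$ (the order of the kernel of reduction). Substituting $k' = k-1$ and taking $k \to \infty$ yields
\[
\mu_r(\vec{v}, M) = \frac{1}{\ell^6}\, \mu_{r-1}\!\left(\frac{\vec{v}}{\ell},\; I + \frac{M-I}{\ell}\right).
\]

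The one step that deserves genuine care is preservation of the row condition: the equation $\vec{x}(\tilde{M}-I) = \tilde{v}$ is set in $\Z/\ell^k\Z$, while its ``divided'' version lives in $\Z/\ell^{k-1}\Z$, so one has to confirm that solvability at the finer level is equivalent to solvability at the coarser level (any lift of a solution $\vec{x} \in (\Z/\ell^{k-1}\Z)^2$ to $(\Z/\ell^k\Z)^2$ works, and any level-$k$ solution reduces). Everything else is routine bookkeeping with the congruences.
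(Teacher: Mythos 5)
Your proof is correct and follows the same route as the paper's: the paper's one-line proof is precisely the observation that solvability of $\tilde{v} \equiv \tilde{x}(\tilde{M}-I) \pmod{\ell^{k}}$ is equivalent to solvability of the divided equation mod $\ell^{k-1}$, and your bijection plus the $\ell^{6}$ denominator bookkeeping is just the spelled-out version of that. No issues.
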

\begin{proof}
  This follows easily from the fact that a solution to $\tilde{v} \equiv \tilde{x} (\tilde{M}-I) \pmod{\ell^{k}}$ is equivalent to a solution
  to $\frac{\tilde{v}}{\ell} \equiv \tilde{x} \frac{M-I}{\ell} \pmod{\ell^{k-1}}$. 
\end{proof}

\begin{lem}
\label{lem3}
If $\det(M-I) \not\equiv 0 \pmod{\ell^{r}}$ and $\vec{v} \in \Row(M-I)$, then
\[
\mu_{r}(\vec{v},M) = \frac{m}{\# \AGL_{2}(\Z/\ell^{r} \Z)}.
\]
\end{lem}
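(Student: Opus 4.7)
The plan is to observe that the condition $\det(M-I) \not\equiv 0 \pmod{\ell^r}$ is equivalent to $\det(M-I)$ being a unit modulo $\ell^r$, i.e., coprime to $\ell$. This rigidity propagates to all lifts, so every $\tilde{M} \in M_2(\Z/\ell^k \Z)$ reducing to $M$ automatically has $\det(\tilde{M}-I)$ a unit modulo $\ell^k$. Hence $\tilde{M}-I$ is invertible over $\Z/\ell^k \Z$, and the linear map $\vec{x} \mapsto \vec{x}(\tilde{M}-I)$ is a bijection on $(\Z/\ell^k \Z)^2$. In particular, $\Row(\tilde{M}-I) = (\Z/\ell^k \Z)^2$, so the condition $\tilde{v} \in \Row(\tilde{M}-I)$ imposes no restriction on the lift.

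Next I would just count. The number of lifts of $M \in M_2(\Z/\ell^r \Z)$ to $M_2(\Z/\ell^k \Z)$ is $\ell^{4(k-r)}$, and the number of lifts of $\vec{v}$ is $\ell^{2(k-r)}$. Combining with the previous paragraph, the numerator of the fraction defining $\mu_r(\vec{v},M)$ is exactly $\ell^{4(k-r)} \cdot \ell^{2(k-r)} = \ell^{6(k-r)}$, independent of any further constraint.

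Finally I would substitute into the definition. Since $\#\AGL_2(\Z/\ell^r \Z)/m = |G|$, the denominator is $|G| \cdot \ell^{6(k-r)}$, so the fraction equals $1/|G| = m/\#\AGL_2(\Z/\ell^r \Z)$ for every $k \geq r$. The limit is therefore this same value, giving the claim.

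There is no real obstacle: the only step worth flagging is the passage from ``unit mod $\ell^r$'' to ``unit mod $\ell^k$,'' which uses that a lift of an element coprime to $\ell$ is itself coprime to $\ell$ (since reduction mod $\ell$ factors through reduction mod $\ell^r$), so that invertibility is preserved. Everything else is straightforward cardinality counting, and no hypothesis beyond those given is needed.
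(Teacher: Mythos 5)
There is a genuine gap at the very first step. The hypothesis $\det(M-I) \not\equiv 0 \pmod{\ell^{r}}$ says only that $\ell^{r} \nmid \det(M-I)$; it does \emph{not} say that $\det(M-I)$ is a unit. For $r \geq 2$ (and the paper works with $\ell = 2$ and $r \geq 3$) these are different conditions: one can have $\det(M-I) = \ell^{s}u$ with $u$ a unit and $1 \leq s \leq r-1$. In that case $\tilde{M}-I$ is \emph{not} invertible over $\Z/\ell^{k}\Z$, and $\Row(\tilde{M}-I)$ is a proper subgroup of index $\ell^{s}$ in $(\Z/\ell^{k}\Z)^{2}$, so your claim that the condition $\tilde{v} \in \Row(\tilde{M}-I)$ ``imposes no restriction on the lift'' fails. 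A telltale sign is that your argument never uses the hypothesis $\vec{v} \in \Row(M-I)$: if $\det(M-I)$ really were a unit, that hypothesis would be vacuous, and the fact that the lemma states it --- and that the paper sends the case $\ell^{r} \mid \det(M-I)$ with $M \not\equiv I \pmod{\ell}$ to Lemma~\ref{lem4} rather than to this lemma --- shows the intended hypothesis is the weaker one.

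The conclusion is still correct, but it needs the argument the paper actually gives. Writing $\det(\tilde{M}-I) = \ell^{s}u$ with $s \leq r-1$, choose $y$ with $y\det(\tilde{M}-I) \equiv \ell^{r-1} \pmod{\ell^{r}}$; the identity $y\,{\rm adj}(\tilde{M}-I)(\tilde{M}-I) \equiv \ell^{r-1}I$ shows that $\Row(\tilde{M}-I)$ contains $\ell^{r-1}(\Z/\ell^{k}\Z)^{2}$, hence all of $\ell^{r}(\Z/\ell^{k}\Z)^{2}$. Combining this with the hypothesis $\vec{v} \in \Row(M-I)$ (lift a preimage $\vec{x}$ of $\vec{v}$ arbitrarily, so that $\tilde{x}(\tilde{M}-I)$ is one lift of $\vec{v}$ lying in the row space), one concludes that \emph{every} lift $\tilde{v}$ of $\vec{v}$ lies in $\Row(\tilde{M}-I)$. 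From that point on, your count of $\ell^{6(k-r)}$ for the numerator and the resulting value $m/\#\AGL_{2}(\Z/\ell^{r}\Z)$ are exactly right; what is missing is the justification that the row-space condition is automatic for all lifts in the case $\ell \mid \det(M-I)$ but $\ell^{r} \nmid \det(M-I)$.
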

\begin{proof}
  This lemma is equivalent to the statement that every vector $\tilde{v} \equiv \vec{v} \pmod{\ell^{r}}$ is in $\Row(\tilde{M}-I)$ for any matrix $\tilde{M} \equiv M \pmod{\ell^{r}}$. If $y$ is an integer so that $y \det(\tilde{M}-I) \equiv \ell^{r-1} \pmod{\ell^{r}}$, this follows from the formula $y {\rm adj}(\tilde{M}-I) (\tilde{M}-I) \equiv \ell^{r-1} I \pmod{\ell^{r}}$. Here ${\rm adj}(\tilde{M}-I)$ is the adjoint
  of $\tilde{M}-I$.
\end{proof}

\begin{lem}
\label{lem4}
Suppose that $\det(M-I) \equiv 0 \pmod{\ell^{r}}$, $\vec{v} \in \Row(M-I)$,
but not all entries in $M-I$ are $\equiv 0 \pmod{\ell}$. Then
\[
  \mu_{r}(\vec{v},M) = \frac{m}{\ell^{6r-4} (\ell-1)^{2} (\ell+1)^{2}}.
\]
\end{lem}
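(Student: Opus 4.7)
The plan is to reduce $M-I$ to Smith normal form over $\Z/\ell^k\Z$ and then convert the count of satisfying lifts into a probability calculation involving two independent uniform $\ell$-adic integers. Since by hypothesis some entry of $M-I$ is a unit modulo $\ell^k$, standard row and column operations produce matrices $P, Q \in \GL_2(\Z/\ell^k\Z)$ with $P(M-I)Q = \begin{bmatrix} 1 & 0 \\ 0 & d_0 \end{bmatrix}$; combined with $v_\ell(\det(M-I)) \geq r$ this forces $v_\ell(d_0) \geq r$. The correspondence $(\tilde v, \tilde M) \mapsto (\tilde v Q,\; P(\tilde M - I)Q + I)$ is a bijection on lifts preserving the reduction mod $\ell^r$ and the condition $\tilde v \in \Row(\tilde M - I)$, so we may assume $M - I$ already has this diagonal form, and then $\Row(M-I) \bmod \ell^r = \Z/\ell^r\Z \times \{0\}$, so $\vec v = (V_1, 0)$ for some $V_1 \in \Z/\ell^r\Z$.

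Next I parametrize the $\ell^{6(k-r)}$ lifts by writing
\[
\tilde M - I = \begin{bmatrix} 1 + \ell^r a & \ell^r b \\ \ell^r c & d_0 + \ell^r d' \end{bmatrix}, \qquad \tilde v = (V_1 + \ell^r w_1,\; \ell^r w_2),
\]
with $a, b, c, d', w_1, w_2 \in \Z/\ell^{k-r}\Z$. Since $1 + \ell^r a$ is a unit, eliminating $u_1$ from the system $\vec u(\tilde M - I) = \tilde v$ reduces it to the single equation $u_2 \cdot \det(\tilde M - I) = (1 + \ell^r a)v_2 - \ell^r b v_1$. This is solvable in $u_2 \in \Z/\ell^k\Z$ precisely when the right-hand side has $\ell$-adic valuation at least $v_\ell(\det(\tilde M - I))$. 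Writing $d_0 = \ell^r e$, both sides are divisible by $\ell^r$, and after dividing the condition becomes $v_\ell(E') \geq v_\ell(D')$ in $\Z/\ell^{k-r}\Z$, where
\[
D' = (1 + \ell^r a)(e + d') - \ell^r bc, \qquad E' = (1 + \ell^r a) w_2 - b V_1 - \ell^r b w_1.
\]

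The crucial observation is that $(D', E')$ is uniformly distributed on $(\Z/\ell^{k-r}\Z)^2$: for each fixed pair $(a, b)$, the map $d' \mapsto D'$ is a bijection on $\Z/\ell^{k-r}\Z$ (with $c$ free), and $w_2 \mapsto E'$ is also a bijection on $\Z/\ell^{k-r}\Z$ (with $w_1$ free). In the limit $k \to \infty$, $D'$ and $E'$ become independent uniform elements of $\Z_\ell$, so
\[
\Pr\bigl[v_\ell(E') \geq v_\ell(D')\bigr] = \sum_{j=0}^{\infty} \frac{\ell - 1}{\ell^{j+1}} \cdot \frac{1}{\ell^j} = \frac{\ell - 1}{\ell} \cdot \frac{\ell^2}{\ell^2 - 1} = \frac{\ell}{\ell + 1}.
\]
The number of satisfying lifts is therefore asymptotic to $\frac{\ell}{\ell+1} \cdot \ell^{6(k-r)}$, and dividing by the denominator $(|\AGL_2(\Z/\ell^r\Z)|/m) \cdot \ell^{6(k-r)} = m^{-1}\ell^{6k-3}(\ell-1)^2(\ell+1)$ yields the claimed value $m/\bigl(\ell^{6r-4}(\ell-1)^2(\ell+1)^2\bigr)$.

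The main technical obstacle is tracking the higher-order $\ell^r$ correction terms carefully enough to verify that $(D', E')$ really is uniform on $(\Z/\ell^{k-r}\Z)^2$, and not merely approximately so; once this is established the remaining computation is a routine geometric series.
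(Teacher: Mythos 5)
Your proof is correct, and it takes a genuinely different route from the paper's. The paper works one level at a time: it writes a lift modulo $\ell^{r+1}$ in terms of six parameters $(a,b,c,d,e,f) \in (\Z/\ell\Z)^{6}$, uses Cramer's rule to write down three congruences equivalent to ``$\det(\tilde{M}-I) \equiv 0 \pmod{\ell^{r+1}}$ and $\tilde{v} \in \Row(\tilde{M}-I)$,'' observes that one congruence is implied by the other two, concludes that exactly $\ell^{4}$ of the $\ell^{6}$ lifts remain ``bad,'' and so obtains the recursion $\mu_{r}(\vec{v},M) = \frac{m(1-1/\ell)}{\#\AGL_{2}(\Z/\ell^{r}\Z)} + \ell^{4}\,\mu_{r+1}(\cdot)$, which it then iterates into a geometric series. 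You instead put $M-I$ into Smith normal form once and for all, reduce the whole question to the single divisibility condition $v_{\ell}(E') \geq v_{\ell}(D')$ for two quantities that you verify are independent and uniform, and compute the probability $\ell/(\ell+1)$ directly; the geometric series $\sum_{j}\ell^{-2j}$ that drives both arguments appears in yours as the valuation distribution rather than as a recursion. Your version has the advantage of avoiding the paper's unproved assertion that the second congruence follows from the first and third, and of making transparent where the factor $\frac{\ell}{\ell+1}$ (equivalently $(\ell-1)^{2}(\ell+1)^{2}$ in the denominator) comes from; the paper's version has the advantage of reusing its one-step-lift machinery verbatim in the proof of Lemma~\ref{lem5}. (Incidentally, the paper's final display omits the factor $(1-\tfrac{1}{\ell})$ from the bracketed sum, though its stated answer is correct; your computation lands on the same value independently, which is a useful check.) Only cosmetic points in yours: $P$ and $Q$ are really computed modulo $\ell^{r}$ and then lifted arbitrarily to $\GL_{2}(\Z/\ell^{k}\Z)$, ``unit modulo $\ell^{k}$'' should read ``unit modulo $\ell$,'' and one should remark that the boundary case $D'=0$ contributes $O(\ell^{-2(k-r)})$ and so vanishes in the limit; none of these is a gap.
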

\begin{proof}
We let $\vec{v} = \begin{bmatrix} \epsilon & \zeta \end{bmatrix}$ and
$M = I + \begin{bmatrix} \alpha & \beta \\ \gamma & \delta \end{bmatrix}$.
We consider lifts $(\tilde{v},\tilde{M})$ mod $\ell^{r+1}$ with
$\tilde{v} = \begin{bmatrix} \epsilon + e \ell^{r} & \zeta + f \ell^{r} \end{bmatrix}$, $\tilde{M} = M + \ell^{r} \begin{bmatrix} a & b \\ c & d \end{bmatrix}$
where $a, b, c, d, e, f \in \Z/\ell \Z$.

One can see that $\det(\tilde{M}-I) \equiv \det(M) + (\alpha \delta - b \gamma - c \beta + d \alpha) \ell^{r} \pmod{\ell^{r+1}}$ and the assumption that
$M \not\equiv I \pmod{\ell}$ guarantees that there are $\ell^{6}-\ell^{5}$ lifts
$(\tilde{v},\tilde{M})$ of $(v,M)$ with $\det(\tilde{M}-I) \not\equiv 0 \pmod{\ell^{r+1}}$. It then suffices to compute the number of lifts $(\tilde{v},\tilde{M})$ with $\det(\tilde{M}-I) \equiv 0 \pmod{\ell^{r+1}}$ and
$\tilde{v} \in \Row(\tilde{M}-I)$. Cramer's rule shows that this occurs if
and only if the choice of $(a,b,c,d,e,f)$ satisfies
\begin{align*}
  a \delta - \gamma b - \beta c + \alpha d &\equiv -\frac{\alpha \delta - \beta \gamma}{\ell^{r}} \pmod{\ell}\\
  -c \zeta + d \epsilon + e \delta - f \gamma &\equiv -\frac{\delta \epsilon - \gamma \zeta}{\ell^{r}} \pmod{\ell}\\
  a \zeta - b \epsilon - \beta e + \alpha f &\equiv -\frac{\alpha \zeta - \beta \epsilon}{\ell^{r}} \pmod{\ell}.
\end{align*}
If we assume that $\alpha \not\equiv 0 \pmod{\ell}$, one can choose arbitrary values for $a, b, c$ and $e$ and then solve the first and third congruences for $d$ and $f$. With these values for $d$ and $f$, one can prove the second congruence is also true. (So the second congruence follows from the first and the third, assuming that $\alpha \not\equiv 0 \pmod{\ell}$.) This implies that there are $\ell^{4}$ lifts $(\tilde{v},\tilde{M})$ with $\tilde{v} \in \Row(\tilde{M}-I)$. (Similarly arguments apply if
$\beta$, $\gamma$ or $\delta$ is $\not\equiv 0 \pmod{\ell}$.) This implies that
\[
 \mu_{r}(\vec{v},M) = \frac{m \left(1 - \frac{1}{\ell}\right)}{\# \AGL_{2}(\Z/\ell^{r} \Z))} + \sum_{(\tilde{v},\tilde{M})} \mu_{r+1}(\tilde{v},\tilde{M}), 
\]
where the sum is over the pairs $(\tilde{v},\tilde{M})$ with $\tilde{v} \in \Row(M-I)$ and $\det(\tilde{M}) \equiv 0 \pmod{\ell^{r+1}}$. We apply the above
equation repeatedly and obtain that
\[
\mu_{r}(\vec{v},M) = \frac{m}{\# \AGL_{2}(\Z/\ell^{r} \Z)} \cdot
\left[1 + \frac{1}{\ell^{2}} + \frac{1}{\ell^{4}} + \cdots\right]
= \frac{m}{(\ell-1)^{2} (\ell+1)^{2} \ell^{6r-4}}.
\]
\end{proof}

If we have a pair $(\vec{v},M)$ with $\det(M-I) \not\equiv 0 \pmod{\ell^{r}}$,
then Lemma~\ref{lem3} applies. If $\det(M-I) \equiv 0 \pmod{\ell^{r}}$ but
not all entries in $M-I$ are $\equiv 0 \pmod{\ell}$, then Lemma~\ref{lem4}
applies. If $M-I \equiv 0 \pmod{\ell}$, then we can apply Lemma~\ref{lem2}
repeatedly until $M-I \not\equiv 0 \pmod{\ell}$, except for the
case that $(\vec{v},M) = (\vec{0},I)$.

\begin{lem}
\label{lem5}
We have $\mu_{r}(\vec{0},I) = \frac{((\ell-1)\ell + 1)m}{(\ell-1) \ell^{6r-5} (\ell^{6} - 1)}$.
\end{lem}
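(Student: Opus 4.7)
The plan is to first reduce to the case $r=1$ using Lemma~\ref{lem2}, then derive a self-referential equation for $\mu_1(\vec{0}, I)$ by splitting its value across lifts to the next level, and finally solve that equation.

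\emph{Reduction.} Lemma~\ref{lem2} applies trivially to $(\vec{0}, I)$ (its hypotheses are vacuous) and its conclusion reads $\mu_r(\vec{0}, I) = \ell^{-6}\, \mu_{r-1}(\vec{0}, I)$ for all $r \geq 2$. Iterating yields $\mu_r(\vec{0}, I) = \ell^{-6(r-1)}\, \mu_1(\vec{0}, I)$, which already accounts for the factor $\ell^{6r-5}$ in the denominator of the claimed formula. It therefore suffices to compute $\mu_1(\vec{0}, I)$.

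\emph{Level-one expansion.} Directly from the definition of $\mu_r$, grouping the lifts modulo $\ell^k$ according to their reduction mod $\ell^{2}$ and using $\#\AGL_2(\Z/\ell^2\Z) = \ell^6\, \#\AGL_2(\Z/\ell\Z)$ produces the additivity identity
\[
  \mu_1(\vec{0}, I) \;=\; \sum_{(\vec{w}, N) \in (\Z/\ell\Z)^2 \times M_2(\Z/\ell\Z)} \mu_2(\ell\vec{w},\, I + \ell N).
\]
Each term is handled by one of the preceding lemmas. The contribution of $(\vec{w}, N) = (\vec{0}, 0)$ is $\ell^{-6}\, \mu_1(\vec{0}, I)$ by the reduction above. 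Terms with $N = 0$ and $\vec{w} \neq 0$ vanish by Lemma~\ref{lem1}, since the row space is trivial. For $N \neq 0$, Lemma~\ref{lem2} applied once rewrites $\mu_2(\ell\vec{w}, I + \ell N) = \ell^{-6}\, \mu_1(\vec{w}, I+N)$, and then Lemma~\ref{lem3} evaluates the term when $N \in \GL_2(\Z/\ell\Z)$ (for every $\vec{w}$), while Lemma~\ref{lem4} evaluates it when $N$ has rank $1$ (only for $\vec{w} \in \Row(N)$, the other $\vec{w}$ being eliminated by Lemma~\ref{lem1}).

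\emph{Counting and solving.} Using the standard counts $|\GL_2(\Z/\ell\Z)| = \ell(\ell-1)^2(\ell+1)$, the fact that there are $(\ell+1)^2(\ell-1)$ rank-one matrices in $M_2(\Z/\ell\Z)$, and that each such matrix has row space of size $\ell$, the sum collapses to
\[
  \mu_1(\vec{0}, I) \;=\; \frac{1}{\ell^6}\,\mu_1(\vec{0}, I) \;+\; \frac{m}{\ell^6} \;+\; \frac{m}{\ell^7(\ell-1)}.
\]
Solving this linear equation for $\mu_1(\vec{0}, I)$ and combining with the reduction step gives the formula in the statement. The main obstacle is the case analysis and the double invocation of Lemma~\ref{lem2} (once at level $r$ and once at level $2$); the self-reference is benign because the coefficient $\ell^{-6}$ is strictly less than $1$.
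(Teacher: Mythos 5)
Your proposal is correct and takes essentially the same route as the paper: reduce to $r=1$ via Lemma~\ref{lem2}, expand $\mu_1(\vec{0},I)$ over the lifts one level up (your sum $\sum \mu_2(\ell\vec{w},I+\ell N)=\frac{1}{\ell^6}\sum\mu_1(\vec{w},I+N)$ is the paper's passage through $\mu_0$ in disguise), evaluate the invertible and rank-one cases with Lemmas~\ref{lem3} and~\ref{lem4}, and solve the resulting linear self-referential equation. Your counts ($(\ell+1)^2(\ell-1)=\ell^3+\ell^2-\ell-1$ rank-one matrices, each with row space of size $\ell$) and the final equation match the paper exactly.
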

\begin{proof}
  By using Lemma~\ref{lem2}, it suffices to handle the case that $r = 1$. Lemma~\ref{lem2} also gives that
\[
\mu_{1}(\vec{0},I) = \frac{1}{\ell^{6}} \mu_{0}(\vec{0},I) = \frac{1}{\ell^{6}}
\sum_{\substack{\vec{v} \in (\Z/\ell \Z)^{2}\\ M \in M_{2}(\Z/\ell \Z)}} \mu_{1}(\vec{v},M).
\]
There are $\ell^{2} \cdot \# \GL_{2}(\Z/\ell \Z)$ pairs $(\vec{v},M)$
for which $M-I$ is invertible. The contribution of these pairs
is $\frac{m}{\ell^{6}}$ by Lemma~\ref{lem3}.

If $\det(M-I) \equiv 0 \pmod{\ell}$ but $M-I$ is not the zero matrix,
then $\# \Row(M-I) = \ell$ and there are $\ell^{3} + \ell^{2} - \ell - 1$
matrices that fall into this case. By Lemma~\ref{lem4}, the contribution
of these cases is
\[
\frac{1}{\ell^{6}} \cdot \left(\frac{m}{(\ell-1)^{2} \ell^{2} (\ell+1)^{2}}\right) \cdot \ell (\ell^{3} + \ell^{2} - \ell - 1) = \frac{m}{(\ell - 1) \ell^{7}}.
\]

Finally, we have the case that $\vec{v} = \vec{0}$ and $M = I$ and we get
\[
\mu_{1}(\vec{0},I) = \frac{m}{\ell^{6}} + \frac{m}{(\ell-1) \ell^{7}} +
\frac{1}{\ell^{6}} \mu_{1}(\vec{0},I).
\]
Solving for $\mu_{1}(\vec{0},I)$ gives the desired result.
\end{proof}

\bibliographystyle{plain}
\bibliography{refs}

\end{document}